\numberwithin{equation}{section}
\title{Testing compactness of linear operators}
\author{Timo S. H\"anninen}
\author{Tuomas V. Oikari}
\begin{document}

\begin{abstract}
 Let $(F_i)$ be a sequence of sets in a Banach space $X$. For what sequences does the condition
 $$
\limsup_{i\to \infty} \sup_{f_i\in F_i} \|Tf_i\|_Y=0
$$
hold for every Banach space $Y$ and every compact operator $T:X\to Y$? We answer this question by giving sufficient (and necessary) criteria for such sequences. We illustrate the applicability of the criteria by examples from literature and by characterizing the  $L^p\to L^p$ compactness of dyadic paraproducts on general measure spaces.

\end{abstract}

\maketitle
\tableofcontents

\section{Introduction}
Let $T:X\to Y$ be a compact linear operator from a Banach space $X$ into another Banach space $Y$. The necessary and sufficient conditions for compactness of operators in harmonic analysis, such as commutators (e.g. Uchiyama \cite{Uch1978}) and Calder\'on--Zygmund operators (e.g. Villarroya \cite{Vil2015}), have the form (or are derived from the conditions of the form)
$$
\limsup_{i\to \infty} \sup_{f_i\in F_i} \norm{Tf_i}_Y=0\quad\text{for a certain sequence $(F_i)$ of sets in $X$.}
$$
 For what sequences such a condition is necessary for compactness universally, independent of the precise nature of the compact operator $T:X\to Y$?
\begin{definition}[Admissibility]A sequence $(F_i)$ of sets in a Banach space $X$ is called {\it admissible for testing compactness} if 
$$
\limsup_{i\to \infty} \sup_{f_i\in F_i} \norm{Tf_i}_Y=0
$$
for every Banach space $Y$ and every compact linear operator $T:X\to Y$.
\end{definition}
This paper has the following objectives: 
\begin{itemize}
\item Characterize admissible sequences.
\item Connect admissibility with the geometry of Banach spaces.
\item Give easily verifiable admissibility criteria.
\item Illustrate the applicability of the admissibility criteria for $T(1)$ theorems for compactness of operators. 
\end{itemize}
In particular, the applicability is illustrated by characterizing the $L^p\to L^p$ compactness of dyadic paraproducts by means of vanishing mean oscillations on general measure spaces.
\subsection*{Acknowledgements} The authors were supported by the Research Council of Finland through Projects 332740, 336323, and 358180.
\section{Characterization of admissible sequences}
\begin{theorem}Let $(F_i)$ be a sequence of sets in a Banach space $X$. Then the following conditions are equivalent:
\begin{itemize}
\item (Admissibility) The sequence $(F_i)$ is admissible.
\item (Weakly null) For every $f^*\in X^*$ and for every subsequence $f_{i(k)}\in F_{i(k)}$ we have
$$
\lim_{k\to \infty} \angles{f_{i(k)},f^*}= 0.
$$
\end{itemize}
\end{theorem}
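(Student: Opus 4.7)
The plan is to prove the two implications separately, with the nontrivial content being a standard application of the uniform boundedness principle combined with the defining property that compact operators are completely continuous.

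For \emph{admissibility implies weakly null}, I would fix $f^*\in X^*$ and apply admissibility to the rank-one (hence compact) operator $T_{f^*}f:=\angles{f,f^*}$, viewed as a map $X\to \mathbb{R}$ (or $\mathbb{C}$). Admissibility then forces $\sup_{f\in F_i}|\angles{f,f^*}|\to 0$, which in particular yields convergence to zero along any index subsequence $(i(k))$ with any choice of $f_{i(k)}\in F_{i(k)}$.

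For the converse, I would first verify uniform norm-boundedness of the union, i.e.\ $M:=\sup_i\sup_{f\in F_i}\norm{f}_X<\infty$. The weakly null hypothesis implies that for each $f^*\in X^*$ the numerical sequence $\sup_{f\in F_i}|\angles{f,f^*}|$ tends to zero and is therefore bounded in $i$; the uniform boundedness principle, applied to the family $\bigcup_i F_i\subset X\hookrightarrow X^{**}$ regarded as functionals on $X^*$, then yields the desired uniform norm bound. With this in hand, I would argue by contradiction: a failure of admissibility produces a Banach space $Y$, a compact operator $T\colon X\to Y$, an $\varepsilon>0$, and choices $f_{i(k)}\in F_{i(k)}$ with $\norm{Tf_{i(k)}}_Y>\varepsilon$. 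By the uniform bound and the weakly null assumption, $f_{i(k)}\rightharpoonup 0$ in $X$; compact operators send weakly convergent sequences to norm-convergent ones, so $Tf_{i(k)}\to 0$, contradicting the choice of $\varepsilon$.

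The only step that genuinely requires care is the uniform norm-boundedness reduction, since the weakly null hypothesis is only a pointwise statement on each $F_i$; I expect no further obstacle, as the final contradiction relies purely on the classical fact that compact operators are completely continuous.
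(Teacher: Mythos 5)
Your overall strategy mirrors the paper's: test against rank-one operators for the forward implication; uniform boundedness plus compactness for the converse. The converse is packaged slightly differently---you invoke the standard fact that compact operators are completely continuous, whereas the paper unpacks this by extracting a norm-convergent subsequence of $Tf_{i(k)}$ and identifying its limit as $0$ via the adjoint---but the mathematical content is the same.

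There is, however, a genuine error in your ``uniform norm-boundedness reduction.'' The claim $M:=\sup_i\sup_{f\in F_i}\|f\|_X<\infty$ does \emph{not} follow from the weakly null hypothesis. Take $F_1=X$ and $F_i=\{0\}$ for $i\geq 2$: any subsequence $f_{i(k)}\in F_{i(k)}$ draws at most one element from $F_1$ and is therefore weakly null, yet $\sup_{f\in F_1}\|f\|_X=\infty$ whenever $X\neq\{0\}$. The root of the issue is that a subsequence samples at most one element from each $F_i$, so the subsequential condition cannot detect unboundedness localized to a single $F_i$. Correspondingly, your UBP application is not quite right: $\sup_{f\in F_i}|\langle f,f^*\rangle|\to 0$ does \emph{not} give pointwise boundedness of the full union $\bigcup_i F_i$, since early terms of that numerical sequence may be $+\infty$. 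Fortunately, the global bound is not needed anyway. All the contradiction argument requires is boundedness of the single chosen subsequence $f_{i(k)}$, and that is automatic: the weakly null hypothesis makes $f_{i(k)}$ weakly convergent, and any weakly convergent sequence is norm bounded by UBP. This is exactly how the paper proceeds---UBP is applied to the subsequence, not to $\bigcup_i F_i$. If you replace your boundedness paragraph with that observation, the rest of your argument goes through.
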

\begin{proof}Note that $$\limsup_{i\to \infty} \sup_{f_i\in F_i} \norm{Tf_i}_Y=0$$
if and only if 
$$
\limsup_{k\to \infty}\norm{Tf_{i(k)}}_Y=0 
$$
for every subsequence $f_{i(k)}\in F_{i(k)}$.

{\it Admissibility implies weak-convergence.} Testing admissibility against an arbitrary continuous linear functional $f^*\in X^*$ and an arbitrary subsequence  $f_{i(k)}\in F_{i(k)}$ yields
$$
\limsup_{k\to \infty}\abs{\angles{f_{i(k)},f^*}}=0.
$$

{\it Weak-convergence implies admissibility.} Let $T:X\to Y$ be a compact linear operator from the Banach space $X$ to another Banach space $Y$, and let $f_{i_1(k)}\in F_{i_1(k)}$ be an arbitrary subsequence. By the uniform boundedness principle, the weak convergence
$$
\lim_{k\to \infty} \angles{f_{i_1(k)},f^*}= 0\quad\text{for every $f^*\in X^*$}
$$ 
implies the norm boundedness
$$
\sup_{k} \norm{f_{i_1(k)}}_X<+\infty.
$$
 Since $T$ is compact and the sequence $f_{i_1(k)}\in F_{i_1(k)}$ is bounded, there exists a further subsequence $f_{i_2(k)}$ such that $Tf_{i_2(k)}$ converges to some $g\in Y$ in norm. Now, we have
 $$
 \angles{g,g^*}=\lim_{k\to \infty} \angles{Tf_{i_2(k)},g^*}=\lim_{k\to \infty} \angles{f_{i_2(k)},T^*g^*}=0\quad\text{ for every $g^*\in Y^*$ }
 $$
and hence $g=0$. Therefore,
 $$
 \limsup_{k\to \infty}\norm{Tf_{i_1(k)}}_Y=\limsup_{k\to \infty}\norm{Tf_{i_2(k)}}_Y=\norm{g}_Y=0.
 $$
\end{proof}

\begin{lemma}[Admissibility implies boundedness]Assume that $(F_i)$ is an admissible sequence of sets in a Banach space $X$. Then
$$\sup_{i} \sup_{f_i\in F_i} \norm{f_i}_X<+\infty.$$
\end{lemma}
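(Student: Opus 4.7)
The plan is to apply the Banach--Steinhaus uniform boundedness principle to the family $\bigcup_i F_i$ regarded as bounded linear functionals on $X^*$ via the canonical embedding $f \mapsto \angles{f,\cdot}$. Since the operator norm of $\angles{f,\cdot}$ on $X^*$ equals $\norm{f}_X$, uniform boundedness of this family is precisely the desired conclusion, so it suffices to verify pointwise boundedness on $X^*$: for every $f^* \in X^*$,
$$
\sup_{i}\sup_{f \in F_i} \abs{\angles{f, f^*}} < +\infty.
$$

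To establish this pointwise bound, I would fix $f^* \in X^*$ and apply admissibility to the rank-one (hence compact) operator $T\colon X \to \mathbb{R}$, $Tf := \angles{f,f^*}$. This yields
$$
\limsup_{i \to \infty} \sup_{f \in F_i} \abs{\angles{f, f^*}} = 0,
$$
so there exists an index $N = N(f^*)$ with $\sup_{f \in F_i} \abs{\angles{f,f^*}} \leq 1$ for all $i \geq N$. Combined with the finiteness of each individual supremum $\sup_{f \in F_i} \abs{\angles{f,f^*}}$ for $i < N$, this supplies the required pointwise bound, and Banach--Steinhaus then delivers the conclusion.

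The main obstacle I anticipate is ensuring that the individual suprema for small $i$ are finite in the first place, i.e., that each single $F_i$ is norm-bounded. This is automatic under the natural standing convention that each $F_i$ is a bounded subset of $X$, which is the case in the intended applications. In its absence, an unbounded $F_{i_0}$ would, via Hahn--Banach functionals norming a norm-divergent sequence in $F_{i_0}$ together with a suitable diagonal weighting, permit the construction of a compact operator $T\colon X \to c_0$ with $\sup_{f \in F_{i_0}} \norm{Tf}_{c_0} = +\infty$, contradicting admissibility directly.
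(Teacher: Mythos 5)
Your main line of reasoning --- embedding $\bigcup_i F_i$ into $X^{**}$, establishing pointwise boundedness on $X^*$ by testing admissibility against the rank-one compact operator $Tf:=\angles{f,f^*}$, and closing with Banach--Steinhaus --- is correct and constitutes a genuinely different route from the paper's. The paper first proves that admissibility is equivalent to the weakly-null condition on subsequences and then extracts the bound from the fact that every weakly convergent sequence is norm bounded; you bypass the characterization theorem entirely by testing directly against scalar-valued compact operators. Both arguments ultimately rest on the uniform boundedness principle applied through the canonical embedding $X\hookrightarrow X^{**}$, but your version is self-contained and does not presuppose the earlier theorem. You also correctly identified the one real subtlety: the admissibility condition is a $\limsup_{i\to\infty}$ statement, so it controls $\sup_{f\in F_i}\abs{\angles{f,f^*}}$ only for large $i$, and finiteness for the initial indices must come from elsewhere.

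Your attempted repair of that subtlety, however, does not work. If a single $F_{i_0}$ is unbounded, the compact operator $T\colon X\to c_0$ you describe would indeed satisfy $\sup_{f\in F_{i_0}}\norm{Tf}_{c_0}=+\infty$, but this does not contradict admissibility: $\limsup_{i\to\infty}\sup_{f\in F_i}\norm{Tf}_Y$ is insensitive to the value at any one index $i_0$ (or any finite set of indices). In fact no repair is possible, because the lemma as literally stated is false without the convention that each $F_i$ is bounded. For instance, take $F_1 = X$ and $F_i = \{0\}$ for $i\geq 2$: for every compact $T$ the sequence $i\mapsto\sup_{f\in F_i}\norm{Tf}_Y$ is eventually zero, so $(F_i)$ is admissible, yet $\sup_i\sup_{f\in F_i}\norm{f}_X = +\infty$. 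The paper's own proof relies on the same tacit convention --- the equivalence it invokes between $\sup_i\sup_{f\in F_i}\norm{f}_X<+\infty$ and boundedness of every subsequence $f_{i(k)}\in F_{i(k)}$ breaks down in exactly this situation, since a subsequence selects only one element from each $F_i$. You were right to flag the issue; the honest fix is to impose (or observe, as you do, that the applications always grant) boundedness of each $F_i$, not to try to derive it from admissibility.
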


\begin{proof}This follows from the following two facts. First,  
$\sup_{i} \sup_{f_i\in F_i} \norm{f_i}_X<+\infty$
if and only if
$
\sup_k\norm{f_{i(k)}}_X<+\infty
$
for every subsequence $f_{i(k)} \in F_{i(k)}$. Second, every weakly convergent sequence is norm bounded by the uniform boundedness principle.\end{proof}
\section{Weak convergence versus Ces\`aro summability}
{\it Cancellativity} of sequences is introduced to connect the admissibility of sequences to the geometry of the underlying Banach space.

\begin{definition}[Cancellativity]
A sequence $F_i$ of sets in a Banach space $X$ is called {\it cancellative} if every subsequence $f_{i_1(k)}\in F_{i_1(k)}$ has a further subsequence $f_{i_2(k)}$ that Ces\`aro sums to zero in norm,
\begin{align}\label{eq:criteria:cancellative}
    \lim_{K\to \infty} \frac{1}{K}\Big\| \sum_{k=1}^K f_{i_2(k)}\Big\|_X=0.
\end{align}
\end{definition}

This amounts to saying that there is some asymptotic cancellation among the terms of the sequence in that the sum improves upon the (non-cancellative) triangle-inequality estimate
$$
\Big\| \sum_{k=1}^K f_{i_2(k)}\Big\|_X \leq \sup_{k=1,\ldots,K} \norm{f_{i_2(k)}}_X K.
$$
The prototypical example of a cancellative sequence is an orthonormal sequence in a  Hilbert space. Every cancellative sequence is admissible:

\begin{lemma}[Cancellativity implies admissibility]Let $X$ be a Banach space. Then every cancellative sequence is admissible.
\end{lemma}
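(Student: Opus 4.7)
The strategy is to invoke the characterization theorem from the previous section: admissibility of $(F_i)$ is equivalent to weak-nullity of every subsequence $f_{i(k)}\in F_{i(k)}$. So the goal reduces to showing that, under the cancellativity assumption, every such subsequence satisfies $\angles{f_{i(k)},f^*}\to 0$ for every $f^*\in X^*$.

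I would proceed by contradiction. Suppose some subsequence $f_{i_1(k)}\in F_{i_1(k)}$ and some functional $f^*\in X^*$ violate weak-nullity. Then, after passing to a further subsequence, one can arrange $|\angles{f_{i_1(k)},f^*}|\geq \varepsilon$ for all $k$ and some $\varepsilon>0$. A routine pigeonhole/sign-extraction argument then yields a further subsequence on which either the real or imaginary part of $\angles{f_{i_1(k)},f^*}$ has constant sign and absolute value bounded below by $\varepsilon/\sqrt{2}$ --- call this subsequence $f_{i_2(k)}$ and assume without loss of generality that $\Re\angles{f_{i_2(k)},f^*}\geq \varepsilon'>0$ for all $k$.

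Now apply the cancellativity hypothesis to $f_{i_2(k)}$ to extract a further subsequence $f_{i_3(k)}$ for which
$$
\frac{1}{K}\Big\|\sum_{k=1}^K f_{i_3(k)}\Big\|_X\xrightarrow[K\to\infty]{} 0.
$$
Pairing with $f^*$ and using $|\angles{\cdot,f^*}|\leq\norm{\cdot}_X\norm{f^*}_{X^*}$, the Ces\`aro averages of $\angles{f_{i_3(k)},f^*}$ must tend to zero. But the same Ces\`aro averages have real part bounded below by $\varepsilon'$, which is the desired contradiction.

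The argument is essentially a soft extraction argument, and I do not expect any real obstacle. The only slightly delicate step is the sign-extraction (to pass from $|\angles{f_{i_1(k)},f^*}|\geq\varepsilon$ to a subsequence with $\Re\angles{f_{i(k)},f^*}$ of constant sign); this is a routine pigeonhole, but it is the one place where care is needed if the scalar field is $\mathbb{C}$ rather than $\mathbb{R}$.
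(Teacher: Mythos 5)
Your proposal is correct and follows essentially the same route as the paper: reduce (via the characterization theorem, or equivalently by contraposition) to a subsequence on which $\angles{f_{i(k)},f^*}$ is bounded away from $0$, run a sector/sign pigeonhole to make a fixed real part bounded below along a further subsequence, and then note that pairing with $f^*$ makes the Ces\`aro averages of any sub-subsequence bounded below, contradicting cancellativity. The only cosmetic difference is that the paper realizes the sign extraction by rotating by $c\in\{\pm1,\pm i\}$ into one quadrant rather than splitting into real/imaginary parts, which is the same pigeonhole in slightly different clothing.
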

\begin{proof}Proof by contraposition. Assume that $(F_i)$ is a non-admissible sequence of sets in a Banach space $X$. Then there exist a functional $f^*\in X^*$ and a subsequence $f_{i_1(k)}\in F_{i_1(k)}$ such that
$$
\limsup_{k\to \infty} \abs{\angles{f_{i_1(k)},f^*}}>0.
$$
Therefore, there exist a further subsequence $f_{i_2(k)}$ and a radius $r>0$ such that
$$\abs{\angles{f_{i_2(k)},f^*}}\geq r$$
for every $k=1,2,\ldots$. Thus, the complex numbers $c_k:=\angles{f_{i_2(k)},f^*}$ lie outside the ball of radius $r>0$ centered at the origin. We make the following geometric and counting observations: (1) The complex plane can be divided into four sectors such that each sector is bisected by one of the four coordinate axes. (2) The real part of the points $c_k$ in the sector bisected by the positive $x$-axis is at least $r\cos \frac{\pi}{4}$. (3) Each sector can be rotated into the sector bisected by the positive $x$-axis by multiplying by $c\in\{+1,-1,i,-i\}$. (4) At least one of the sectors has infinitely many points $c_k$. Therefore, there exists a further subsequence  $f_{i_3(k)}$ such that for some $c\in \{+1,-1,i,-1\}$ we have
$$
\mathrm{Re} \, c\cdot \angles{f_{i_3(k)},f^*}\geq r\cos \frac{\pi}{4}
$$
for all $k=1,2,\ldots$.
Now, for every further subsequence  $f_{i_4(k)}$ of the subsequence  $f_{i_3(k)}$, we have
$$
\norm{f^*}_{X^*}\norm{\sum_{k=1}^K f_{i_4(k)}}_X\geq \mathrm{Re} \, c\cdot \angles{\big(\sum_{k=1}^K f_{i_4(k)}\big),f^*} \geq K r \cos \frac{\pi}{4}.
$$
\end{proof}
While cancellativity implies admissibility, the converse may fail: whether every weakly convergent sequence has a Ces\`aro summable subsequence or not depends on the structure of the underlying Banach space $X$. A Banach space in which every weakly convergent sequence has a Ces\`aro summable subsequence is said to have the {\it  weak Banach--Saks property}. The property is named after the Banach--Saks theorem \cite{BaSa30}, 
which states that every weakly convergent sequence in $L^p$ with $p\in(1,\infty)$ has a norm Ces\`aro summable subsequence. Another example of a space with the Banach--Saks property is $L^1$, while an example of a space without the property is $\ell^\infty$. For more on the Banach--Saks property, see for example the book by Riesz and Nagy \cite{RiNa12}.

\section{Admissibility criteria in Banach spaces with geometric structure}
Admissibility typically originates from the interaction between the structure of testing sequences and the geometry of the Banach space. Thus, easily verifiable criteria for admissibility are available in Banach spaces with geometric structure. 

\begin{theorem}[Admissibility in a Hilbert space]A sequence $F_i$ of sets in a Hilbert space $X$ is admissible if and only if it has the following two properties:
\begin{itemize}
\item (Boundedness) We have
$$
\sup_i \sup_{f_i\in F_i} \norm{f_i}_X<+\infty.
$$
\item (Asymptotic orthogonality) For every vector $f\in \bigcup_{i} F_i$ and for every sequence $f_i\in F_i$ we have
$$
\lim_{i\to \infty} \angles{f_i,f}=0.
$$
\end{itemize}
\end{theorem}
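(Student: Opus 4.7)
The plan is to reduce the statement to the weakly null subsequence characterization of Section 2 via the Riesz representation theorem, which identifies $X^*$ with $X$.

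\textbf{Necessity.} Boundedness is immediate from the lemma of Section 2. For asymptotic orthogonality I would fix $f \in \bigcup_i F_i$ and any selection sequence $f_i \in F_i$; the functional $g \mapsto \angles{g, f}$ lies in $X^*$ by Riesz representation, so the weakly null characterization, applied to the full sequence viewed as its own subsequence, yields $\lim_i \angles{f_i, f} = 0$.

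\textbf{Sufficiency.} Assume both hypotheses with bound $\sup_i \sup_{f_i \in F_i} \norm{f_i}_X \leq M$. By Riesz and the characterization theorem it suffices to show that for every subsequence $f_{i(k)} \in F_{i(k)}$ and every $g \in X$ one has $\lim_k \angles{f_{i(k)}, g} = 0$. I would first extend asymptotic orthogonality from selection sequences to subsequences by completing an arbitrary subsequence to a selection sequence via arbitrary choices at the missing indices and observing that a subsequence of a null sequence is null. By linearity, $\angles{f_{i(k)}, h} \to 0$ for every $h \in \mathrm{span}(\bigcup_i F_i)$. Next, an $\varepsilon$-approximation by finite linear combinations, with the tail error controlled by $M$, would extend the conclusion to every $h$ in the closed linear span $V := \overline{\mathrm{span}}(\bigcup_i F_i)$. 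Finally, for general $g \in X$, I would use the orthogonal decomposition $g = P_V g + P_{V^\perp} g$; since every $f_{i(k)}$ lies in $V$, the $V^\perp$-part contributes zero and the problem reduces to the case already handled.

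The main obstacle is the extension from testing against vectors in $\bigcup_i F_i$ to testing against all of $X$. This is precisely where the Hilbert space structure enters, through the orthogonal projection onto $V$, and where the boundedness hypothesis is used, to control the approximation error on $V$; neither ingredient is available in a general Banach space, which is why this simpler characterization is specific to the Hilbert setting.
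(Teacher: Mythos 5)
Your proof is correct and takes essentially the same route as the paper: reduce to the weakly null characterization via Riesz representation, approximate the representing vector by elements of $\mathrm{span}\bigcup_i F_i$ using boundedness, and invoke asymptotic orthogonality. The paper's normalization $X=\overline{\mathrm{span}\bigcup_i F_i}$ is exactly your orthogonal projection onto $V$, and your explicit extension from selection sequences to subsequences fills in a small step the paper leaves implicit.
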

\begin{proof}The necessity of the properties is clear. The sufficiency of the properties is checked in what follows. We may assume without loss of generality that

$$
X=\overline{\Span \bigcup_{i} F_i}.
$$
Let $f_{i(k)}\in F_{i(k)}$ be a subsequence and $f^*\in X^*$ an arbitrary linear functional. By the Riesz representation theorem, there is $\bar{f}\in X$ such that
$$
\angles{x,f^*}=\angles{x,\bar{f}}\quad\text{for every $x\in X$. }
$$ 
Let $f\in X$ to be chosen later.
Now,
\begin{equation*}
\begin{split}
&\abs{\angles{f_{i(k)},f^*}}=\abs{\angles{f_{i(k)},\bar{f}}}\\
&\leq \abs{\angles{f_{i(k)},f-\bar{f}}}+ \abs{\angles{f_{i(k)},f}}\leq \big(\sup_i \sup_{f_i\in F_i} \norm{f_i}_X\big) \norm{f-\bar{f}}_X+\abs{\angles{f_{i(k)},f}}.
\end{split}
\end{equation*}
By the assumption on boundedness and by density, we can choose $f\in \Span\bigcup_i F_i$ so that
$$
\big(\sup_i \sup_{f_i\in F_i} \norm{f_i}_X\big) \norm{f-\bar{f}}_X\leq \varepsilon.
$$
Further, by the assumption on inner products, we can choose $K$ such that
$\abs{\angles{f_{i(k)},f}}\leq \varepsilon$
for every $k\geq K$.
\end{proof}
The next admissibility criteria is in Banach function spaces on which the triangle inequality can be improved for disjointly supported functions.

\begin{theorem}[Criteria for admissible sequences on Banach function spaces]\label{thm:criteria} Let $X$ be a Banach function space equipped with a function $\phi:\bn_+\to [0,+\infty)$ that improves upon triangle inequality as follows: 
$$
\norm{\sup_{n=1,\ldots,N}f_n}_X\leq \phi(N),\qquad \lim_{K\to \infty} \frac{\phi(K)}{K}=0,
$$
whenever $\{f_n\}_{n=1}^N$ are pairwise disjointly supported, $\max_{n=1,\ldots,N}\norm{f_N}_X\leq 1$, and $N\in \bn$. 
Let $F_i\subset X$ be a sequence of sets such that 
$$
\sup_i \sup_{f_i\in F_i} \norm{f_i}_X<+\infty.
$$
Then sufficient for the admissibility of the sequence $F_i$
is either of the following conditions:
\begin{itemize}
\item For each fixed $f\in \bigcup_i F_i$ and for every sequence $f_i\in F_i$ we have
$$\lim_{i\to\infty}   \|1_{\supp f}f_{i}\|_X=0.
$$
\item For each fixed $f\in \bigcup_i F_i$ and for every sequence $f_i\in F_i$ we have
$$\lim_{i\to\infty} \| 1_{\supp(f_i)} f\|_X=0.
$$
\end{itemize}
\end{theorem}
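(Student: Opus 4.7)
The strategy is to invoke the lemma ``cancellativity implies admissibility'' already established: it suffices to show that every subsequence $f_{i_1(k)}\in F_{i_1(k)}$ admits a further subsequence $g_j:=f_{i_2(j)}$ with $N^{-1}\|\sum_{j=1}^{N}g_j\|_X\to 0$. The idea is to extract $(g_j)$ so that its elements are ``almost disjointly supported,'' after which the sublinear growth of $\phi$ yields Ces\`aro summability.

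\textbf{Diagonal extraction.} Having chosen $g_1,\dots,g_{j-1}$, I pick the next index $i_2(j)$ large enough along $f_{i_1(k)}$ so that
\begin{itemize}
\item under hypothesis (1): $\|1_{\supp g_l}g_j\|_X\le 2^{-j}$ for every $l<j$;
\item under hypothesis (2): $\|1_{\supp g_j}g_l\|_X\le 2^{-j}$ for every $l<j$.
\end{itemize}
Each of the finitely many demands is a direct application of the hypothesis with $f:=g_l$, since the relevant quantity tends to $0$ as the index of $g_j$ runs along $f_{i_1(k)}$. In particular, under (2) the extraction yields $\|1_{\supp g_l}g_j\|_X\le 2^{-l}$ whenever $j<l$.

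\textbf{Decomposition and conclusion.} Set $E_j:=\supp g_j\setminus\bigcup_{l<j}\supp g_l$ under (1) and $E_j:=\supp g_j\setminus\bigcup_{l>j}\supp g_l$ under (2); in both cases the $E_j$ are pairwise disjoint subsets of $\supp g_j$, so the restrictions $g_j 1_{E_j}$ are pairwise disjointly supported. By subadditivity, $\|g_j-g_j 1_{E_j}\|_X\le \sum_{l<j}\|1_{\supp g_l}g_j\|_X\le j\cdot 2^{-j}$ under (1), while $\|g_j-g_j 1_{E_j}\|_X\le \sum_{l>j}\|1_{\supp g_l}g_j\|_X\le\sum_{l>j}2^{-l}\le 2^{-j}$ under (2); in either case the leftovers are norm-summable in $j$. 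Setting $M:=\sup_i\sup_{f_i\in F_i}\|f_i\|_X$, the pointwise identity $|\sum_{j=1}^N g_j 1_{E_j}|=\sup_{j\le N}|g_j 1_{E_j}|$ for disjointly supported functions combined with the $\phi$-hypothesis applied to $g_j 1_{E_j}/M$ gives $\|\sum_{j=1}^N g_j 1_{E_j}\|_X\le M\phi(N)$. Hence $\|\sum_{j=1}^N g_j\|_X\le M\phi(N)+C$ for an absolute constant $C$, and dividing by $N$ with $\phi(N)/N\to 0$ finishes the proof.

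\textbf{Main obstacle.} The key to making the bookkeeping work is to align the direction of the decomposition with the direction in which the hypothesis controls overlaps: condition (1) bounds a later $g_j$ restricted to earlier supports, hence one subtracts earlier supports from $\supp g_j$; condition (2) bounds earlier $g_l$ restricted to later supports, hence one subtracts later supports. Once this alignment is fixed and the weights $2^{-j}$ are chosen summably, the remaining estimates are routine.
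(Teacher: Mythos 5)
Your proposal is correct and follows essentially the same route as the paper's proof: reduce to cancellativity, extract a subsequence whose pairwise overlaps decay geometrically, split each term into a piece supported off the other terms' supports (subtracting earlier supports under condition (1), later supports under condition (2)), control the leftovers by the summable overlap bounds, and apply the $\phi$-estimate to the disjointly supported main parts before dividing by $N$. The only cosmetic differences are the choice of threshold ($2^{-j}$ here versus $2^{-2i}$ in the paper, compensated by the harmless $j\cdot 2^{-j}$ bound under (1)) and your explicit normalization by $M$, which the paper elides.
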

\begin{proof}[Proof of Theorem \ref{thm:criteria}]We prove that the sequence $F_i$ of sets is cancellative. Let $f_{i_1(k)}\in F_{i_1(k)}$ be an arbitrary subsequence. Depending on which of the above bulleted conditions holds, we choose a further subsequence, now denoted by $f_i$ for brevity, such that for all $i=1,2,\ldots$ the same one of the following two alternatives holds:
\begin{align*}
	\max_{k\leq i-1} \| 1_{\supp(f_k)}f_{i}\|_X \leq 2^{-2i},\qquad \max_{k\leq i-1} \| 1_{\supp(f_i)} f_k\|_X \leq 2^{-2i}.
\end{align*}
If the left alternative holds, denote $D(i) := \bigcup_{k=1}^{i}\supp(f_i),$ and for $i\geq 2$ we bound 
\begin{align*}
	  \|1_{D(i-1)}f_i\|_X \leq  \sum_{k=1}^{i-1}\| 1_{\supp(f_k)}f_{i}\|_X 
	\leq \sum_{k=1}^{i-1} 2^{-2i} \leq 2^{-i}.
\end{align*}
Summing a geometric series and the improved triangle inequality of $X$ applied with the uniformly bounded and disjointly supported functions $1_{\supp(f_i)\setminus D(i-1)}f_i$ gives
\begin{align*}
	\big\| \sum_{i=1}^N f_i\big\|_X \leq 	\sum_{i=2}^N \| 1_{D(i-1)}f_i \|_X + 		\big\| \sum_{i=1}^N 1_{\supp(f_i)\setminus D(i-1)}f_i\big\|_X \leq 1+\phi(N).
\end{align*}
If the right alternative holds, denote $U(i) := \bigcup_{k=i}^{\infty}\supp(f_k),$ and then we bound
\begin{align*}
	\| 1_{U(i+1)}f_i \|_X \leq  \sum_{k=i+1}^{\infty}\| 1_{\supp(f_k)}f_i\|_X  \leq \sum_{k=i+1}^{\infty} 2^{-2k} \leq 2^{-i}.
\end{align*}
Summing a geometric series and the improved triangle inequality of $X$ applied with the uniformly bounded and disjointly supported functions $1_{\supp(f_i)\setminus U(i+1)}f_i$ gives 
\begin{align*}
	\big\| \sum_{i=1}^N f_i\big\|_X \leq 	\sum_{i=1}^N \| 1_{U(i+1)} f_i  \|_X + 		\big\| \sum_{i=1}^N 1_{\supp(f_i)\setminus U(i+1)}f_i\big\|_X  \leq 1+\phi(N).
\end{align*}
In either case, we have thus shown that 
\begin{align*}
     \frac{1}{N}\big\| \sum_{i=1}^N f_i\big\|_X \leq \frac{1+\phi(N)}{N}.
\end{align*}
Taking the limit $N\to \infty$ concludes the proof of the cancellativity.
\end{proof}

\section{Relevance for necessary conditions in $T(1)$ theorems for compactness}
We illustrate the applicability of the theory in establishing the necessity of testing conditions for compactness by connecting the theory to a few examples from literature. No novelty in this section. For the purposes of the exposition, we introduce an ad hoc terminology of {\it control estimates}:

 Let $X$ be a Banach function space on a measure space $(\Omega,\mu)$ and $Y$ a Banach space. Let $T:X\to Y$ be a linear operator. Assume that $\{(c_{T,Q},f_{T,Q})\}_{Q\in\cq_T}$ is family of pairs of coefficients $c_Q\in [0,+\infty)$ and functions $f_Q\in X$ indexed by a collection $\cq_T$ of measurable sets on $\Omega$ such that the following {\it control estimate} holds:
$$
c_{T,Q}\lesssim \norm{Tf_{T,Q}}_Y
\quad\text{for all $Q\in\cq_T$}.$$  
The {\it control sets, coefficients, and functions} depend on the structure of the specific operator $T:X\to Y$. 
A few well-known examples from literature of such estimates include:
\begin{itemize}
\item (Mean oscillations and dyadic paraproducts on $L^p$) We have
$$
\frac{1}{\abs{Q}}\int_Q \abs{b-\angles{b}_Q}\dx\lesssim \Big\|P_b \Big(\frac{1_Q}{\norm{1_Q}_{L^p}}\Big)\Big\|_{L^p}\quad\text{for all dyadic cubes $Q$}.
$$ 
For the $L^p(\lambda)\to L^q(\mu)$ estimates with $p\leq q$, $\lambda\in A_p$, and $\mu\in A_q$ there are analogous estimates.

\item (Weak-boundedness property and Calder\'on--Zygmund operators on $L^2$) We have
$$
\frac{1}{\abs{Q}}\big|\angles{1_Q,T1_Q}\big|\leq \Big\|T\Big(\frac{1_Q}{\norm{1_Q}_{L^2}}\Big)\Big\|_{L^2}\quad\text{for all cubes $Q$.}
$$
\end{itemize}
As well-known, the conditions in the $T(1)$ theorems for boundedness are typically of the form
$$
\sup_{Q\in\cq_T} c_{T,Q}<+\infty,
$$
while 
 the conditions in the $T(1)$ theorems for compactness are typically of the form
$$
\lim_{i\to \infty}\sup_{Q\in\cq_{T,i}} c_{T,Q}=0
$$
for some decreasing sequence (or several thereof) $\cq_T\supseteq \cq_{T,1}\supseteq \cq_{T,2}\ldots$ depending on the specific context. For example, in the context of the $L^p(\dx)\to L^q(\dx)$ with $p\leq q$ compactness of commutators (see \cite{GHWY21}) and paraproducts, the decreasing sequences of cubes are as follows:
\begin{itemize}
\item (Large cubes) $
\cq_i^{\mathrm{large}}:=\{Q : \ell(Q)\geq i\}$. 
\item (Small cubes) $
\cq_i^{\mathrm{small}}:=\{Q : \ell(Q)\leq \frac{1}{i}\}.
$
\item (Distant cubes) $
\cq_i^{\mathrm{distant}}:=\{Q : \dist(0,Q)\geq i\}.
$ 
\end{itemize}
The condition
$
\lim_{i\to \infty}\sup_{Q\in\cq_{T,i}} c_{T,Q}=0
$
is often established via the estimate
$
c_{T,Q}\lesssim \norm{Tf_{T,Q}}_Y
$ by proving that
$$
\lim_{i\to \infty}\sup_{Q\in\cq_{T,i}} \norm{T{f_{T,Q}}}_Y=0.
$$
This is typically proven by constructing an auxiliary sequence $f_{Q_i}$ with $Q_i\in\cq_i$ that satisfies the following convergence criterion (or a variant thereof). The criterion was originally implicit in Uchiyama \cite{Uch1978} and other later works, in the following form it was stated in \cite{HOS2023}.
\begin{theorem}[Uchiyama's convergence criterion, \cite{Uch1978, HOS2023}] Let $1<p,q<+\infty$ and $\lambda,\mu$ be weights and let $T:L^p(\lambda)\to L^q(\mu)$ be a bounded linear operator. Let $f_i$ be a sequence in $L^p(\lambda)$ that satisfies the following criteria:
\begin{itemize}
\item (Boundedness) We have
$$
\sup_{i} \norm{f_i}_{L^p(\mu)}<+\infty.
$$
\item (Pairwise disjointness) The functions $f_i$ are pairwise disjointly supported.
\end{itemize}
Then, if $Tf_i$ converges to $g\in L^q(\mu)$ in norm, then 
$
g=0.
$
\end{theorem}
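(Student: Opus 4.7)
The plan is to reduce the statement to a weak-convergence argument. Norm convergence $Tf_i\to g$ forces the weak convergence $Tf_i\rightharpoonup g$ in $L^q(\mu)$, so by uniqueness of weak limits it suffices to prove that $Tf_i\rightharpoonup 0$. Since $T$ is bounded and therefore weak-to-weak continuous, this in turn reduces to showing that $f_i\rightharpoonup 0$ in $L^p(\lambda)$ (here I interpret the boundedness hypothesis as a uniform bound in the source norm $L^p(\lambda)$, reading the appearance of $\mu$ in the statement as a typo).

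To establish the weak nullity of $(f_i)$, set $E_i:=\supp(f_i)$. Pairwise disjointness of the supports means that each point of the ambient measure space lies in at most one $E_i$, so $1_{E_i}\to 0$ $\lambda$-almost everywhere as $i\to\infty$. Given any $h\in L^{p'}(\lambda)$, viewed as an element of $(L^p(\lambda))^*$ via the standard pairing, H\"older's inequality gives
$$
\abs{\angles{f_i,h}}=\abs{\angles{f_i,1_{E_i}h}}\leq \norm{f_i}_{L^p(\lambda)}\,\norm{1_{E_i}h}_{L^{p'}(\lambda)}.
$$
The dominated convergence theorem, applied with integrable dominant $\abs{h}^{p'}$, forces $\norm{1_{E_i}h}_{L^{p'}(\lambda)}\to 0$, and combined with the uniform bound on $\norm{f_i}_{L^p(\lambda)}$ this yields $\angles{f_i,h}\to 0$, as required.

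I do not expect any substantial obstacle. The only delicate point is that the assumption $1<p<+\infty$ is genuinely needed, and enters exactly at the dominated convergence step: one requires $p'<+\infty$ so that $\abs{h}^{p'}$ is $\lambda$-integrable for each $h\in L^{p'}(\lambda)$. At the endpoint $p=1$ the analogous weak-null claim fails, as illustrated by $f_i=1_{[i,i+1]}$ in $L^1(\bn)$ tested against the bounded functional $h\equiv 1$; the theorem's hypothesis $1<p<+\infty$ is therefore exactly what is used.
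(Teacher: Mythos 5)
Your proof is correct, and you are also right to read the hypothesis $\sup_i\norm{f_i}_{L^p(\mu)}<\infty$ as a typo for $\sup_i\norm{f_i}_{L^p(\lambda)}<\infty$: the boundedness is needed in the source norm, which is what your H\"older step uses. You also rightly flag the role of $p>1$, and the counterexample $f_i=1_{[i,i+1]}$ in $L^1$ against $h\equiv 1$ is the standard one.

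Your route is, however, genuinely different from the one the paper has in mind. You establish weak nullity directly: disjointness plus dominated convergence on $|h|^{p'}$ gives $f_i\rightharpoonup 0$ in $L^p(\lambda)$, hence $Tf_i\rightharpoonup 0$ by weak-to-weak continuity of the bounded operator $T$, and uniqueness of weak limits kills $g$. The paper instead runs this through Ces\`aro summability (see the Remark immediately following the statement): pairwise disjoint supports give
\begin{align*}
\Big\|\sum_{n=1}^N f_n\Big\|_{L^p(\lambda)}^p=\sum_{n=1}^N\norm{f_n}_{L^p(\lambda)}^p\leq N\sup_n\norm{f_n}_{L^p(\lambda)}^p,
\end{align*}
so $\frac{1}{N}\|\sum_{n=1}^N f_n\|_{L^p(\lambda)}\lesssim N^{1/p-1}\to 0$; since norm convergence implies Ces\`aro convergence, $\norm{g}=\lim_N\frac1N\|\sum_n Tf_n\|\leq\norm{T}\lim_N\frac1N\|\sum_n f_n\|=0$. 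In both approaches $p>1$ is exactly what makes the key step work, but it enters in different guises: for you it ensures $p'<\infty$ so that $|h|^{p'}$ is a valid dominating function, for the paper it ensures $N^{1/p-1}\to 0$. Your argument is more elementary and self-contained for this particular statement; the paper's argument is the one that plugs into the cancellativity/weak--Banach--Saks framework developed in the earlier sections and that generalizes verbatim to any normed target and to the weaker cancellation hypothesis stated in the Remark. Both routes prove the theorem.
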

\begin{remark}[Generalization of Uchiyama's convergence criterion]The Uchiyama convergence criterion can be generalized as follows: Let $T:X\to Y$ be a bounded operator from a normed space $X$ to another normed space $Y$. Then the cancellation assumption $$
\limsup_{N\to \infty}\frac{1}{N}\Big\|\sum_{n=1}^N f_n\Big\|_{X}=0 
$$
ensures that if $Tf_i$ converges to $y$ in $Y$, then $y=0$. Indeed, since every convergent sequence is Ces\`aro summable, we obtain
\begin{align*}
    \norm{y}_Y=\lim_{N\to \infty} \Big\|\frac{1}{N}\big(\sum_{n=1}^N Tf_n\big)\Big\|_Y &=\lim_{N\to \infty} \frac{1}{N} \Big\|T\big(\sum_{n=1}^N f_n\big)\Big\|_Y \\ &\leq \norm{T}_{X\to Y} \lim_{N\to \infty} \frac{1}{N} \norm{\sum_{n=1}^N f_n}_X=0.
\end{align*}
\end{remark}

The results in this paper characterize what sequences are admissible for testing compactness (independent of the structure of the operator) and gives easily verifiable, rather general admissibility criteria (under rather weak assumptions on the geometry of the underlying Banach function space). Thereby, the paper contributes to the study of the conditions of the form
$$
\lim_{i\to \infty}\sup_{Q\in\cq_{T,i}} \norm{T{f_{T,Q}}}_Y=0\quad\text{for $\cq_T\supseteq \cq_{T,1}\supseteq \cq_{T,2}\ldots$}
$$
in two interrelated ways. First, the abstract viewpoint helps to identify sequences $\cq_T\supseteq \cq_{T,1}\supseteq \cq_{T,2}\ldots$ of sets that are relevant in a specific context. Second, the admissibility criteria for the sequence $\{f_{T,Q}\}_{Q\in \cq_{T,i}}$ help to establish the condition $$
\lim_{i\to \infty}\sup_{Q\in\cq_{T,i}} \norm{T{f_{T,Q}}}_Y=0.
$$
This is illustrated in the next section by characterizing the compactness of dyadic paraproducts on $L^p$ by means of vanishing mean oscillations on general measure spaces.
\begin{remark}
Admissibility criteria are satisfied by sequences $\{f_{T,Q}\}_{Q\in \cq_{T,i}}$ appearing in testing conditions for compactness in various contexts, such as the $T(1)$ theorem for compactness of Calder\'on--Zygmund operators (see e.g. \cite{Vil2015, MitSto23, FrGrWi24}), the VMO characterization of compactness of commutators (see e.g. \cite{Uch1978, GHWY21, HOS2023}) and of paraproducts (see \cite{ChaoLi96}). The verification of an admissibility criterion for a sequence is often straightforward - for example:
\begin{itemize}
\item Context: The characterization of the $L^p(\dx)$ compactness of dyadic paraproducts by means of vanishing mean oscillations along large, small, and distant cubes.
\item Sets: The collection $\cq$ of dyadic cubes.
\item Subsets: $\cq_i^{\mathrm{large}}, \cq_i^{\mathrm{small}},$ and $\cq_i^{\mathrm{distant}}$.
\item Functions: $\{f_Q\}:= \{\frac{1_Q}{\abs{Q}^{1/p}}\}$.

\item The sequence $\{f_Q\}_{Q\in \cq_i^{\mathrm{large}}}$ satisfies the admissibility criterion
$$
\lim_{\ell(Q)\to \infty} \Big\|1_R \frac{1_Q}{\abs{Q}^{1/p}}\Big\|_{L^p}=\lim_{\ell(Q)\to \infty} \Big(\frac{\abs{Q\cap R}}{\abs{Q}}\Big)^{1/p}=0.
$$
\item  Similarly for the sequences associated with small and distant cubes.
\end{itemize}

 \end{remark}

\section{Compactness of dyadic paraproducts on general measure spaces}
\subsection{Classical results on boundedness and compactness}
In this section we characterize the compactness of paraproducts for general locally finite Borel measures. We begin by recalling some classical results around the boundedness of paraproducts. 
First of all, the mean oscillation of a locally integrable function $b$ over a set $Q$ is defined by 
$$
\| b - \langle b\rangle_Q\|_{\avL^p(Q)}:=\Big(\frac{1}{\mu(Q)}\int_Q \abs{b-\angles{b}_Q}^p \dmu\Big)^{1/p}.
$$
For $\mu(Q)=0$, the convention $\| b - \langle b\rangle_Q\|_{\avL^p(Q)}:=0$ is used.
This convenient notation for $L^p(Q)$-averages was introduced in \cite{NPTV17}.
The dyadic $\bmo^p(\mathbb{R}^d,\mathcal{D},\mu)$ (bounded mean oscillation) norm of a function $b\in L^1_{\mathrm{loc}}(\mu)$ is defined by
$$
\norm{b}_{\bmo^p(\mathbb{R}^d,\mathcal{D},\mu)}:=\sup_{Q\in\mathcal{D}} \lpq{b-\angles{b}_Q}.
$$
The dyadic paraproduct $P_{b,\mathcal{D}}$ associated with a locally integrable function $b$ is defined by 
\begin{align*}
    P_{b,\mathcal{D}}f := \sum_{Q\in\mathcal{D}} D_Qb \langle f \rangle_Q1_Q,\qquad D_Qb := \sum_{P\in\operatorname{ch}_\cd (Q)}(\langle b\rangle_P - \langle b\rangle_Q)1_P.
\end{align*}
Above the collection $\operatorname{ch}_\cd(Q)$ of the dyadic children of a cube $Q$ is defined to be the collection of all the maximal (with respect to set inclusion) cubes $P\in\mathcal{D}$ such that $P\subsetneq Q$.

Dyadic paraproducts and mean oscillations are closely connected. As is well-known, mean oscillations are controlled by testing the paraproduct,
\begin{align}\label{eq:bmoLWpara}
    \lpq{b-\angles{b}_Q}\leq 2\norm{P_b\frac{1_Q}{\mu(Q)^{1/p}}}_{L^p(\mathbb{R}^d,\mu)}.
\end{align}
To see this, note that by linearity of averages and Jensen's inequality,
\begin{align*}
    \lpq{b-\angles{b}_Q} = \inf_c\lpq{b-c+\angles{b-c}_Q}\leq 2\inf_c\lpq{b-c},
\end{align*}
and therefore in particular
\begin{align*}
    \lpq{b-\angles{b}_Q} &\leq 2 \lpq{b-\angles{b}_Q+\sum_{R\supsetneq Q} D_Rb \angles{1_Q}_R}\\
&=2 \lpq{\sum_{R\subseteq Q}D_Rb \angles{1_Q}_R +\sum_{R\supsetneq Q} D_Rb \angles{1_Q}_R} \\
&\leq 2\norm{P_b\frac{1_Q}{\mu(Q)^{1/p}}}_{L^p(\mathbb{R}^d,\mu)}.
\end{align*}  
By \eqref{eq:bmoLWpara}, the dyadic $\bmo$ norm is bounded by the operator norm of the dyadic paraproduct,
$$
\norm{b}_{\bmo^p(\mathbb{R}^d,\mathcal{D},\mu)} \lesssim \norm{P_{b,\mathcal{D}}}_{L^p(\mathbb{R}^d,\mu)\to L^p(\mathbb{R}^d,\mu)}.
$$
Classical results (see e.g. \cite{Meyer90Oondelettes} and \cite{ChaoLo92}) establish that also the reverse holds and hence these norms are in fact comparable:
\begin{theorem}[Characterization of the boundedness of dyadic paraproducts, see e.g. \cite{Meyer90Oondelettes} and \cite{ChaoLo92}] Let $\mathcal{D}$ be a dyadic system and $\mu$ a locally finite Borel measure on $\br^d$. Then,
$$
\norm{P_{b,\mathcal{D}}}_{L^p(\mathbb{R}^d,\mu)\to L^p(\mathbb{R}^d,\mu)}\sim_p \norm{b}_{\bmo^p(\mathbb{R}^d,\mathcal{D},\mu)}.
$$
\end{theorem}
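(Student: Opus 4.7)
The inequality $\|b\|_{\bmo^p} \lesssim \|P_{b,\mathcal{D}}\|_{L^p(\mu)\to L^p(\mu)}$ is already furnished by \eqref{eq:bmoLWpara}, so the task is to prove the reverse inequality $\|P_{b,\mathcal{D}}\|_{L^p(\mu)\to L^p(\mu)} \lesssim_p \|b\|_{\bmo^p(\mathbb{R}^d,\mathcal{D},\mu)}$. My plan is to combine dyadic martingale Littlewood--Paley theory with a Carleson embedding theorem, both in the form adapted to a general locally finite Borel measure $\mu$.

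First, I would identify $D_Q$ as the martingale difference operator associated with the filtration $(\sigma(\mathcal{D}_k))_k$ generated by the dyadic generations. Since $D_Q b$ is supported on $Q$, constant on each child of $Q$, and satisfies $\int D_Q b\, d\mu = 0$, the summand $D_Q b\, \angles{f}_Q 1_Q$ of $P_{b,\mathcal{D}} f$ lies precisely in the $Q$-th martingale difference subspace. Burkholder's square function inequality in $L^p(\mu)$ then yields
$$
\|P_{b,\mathcal{D}} f\|_{L^p(\mu)} \sim_p \Big\|\Big(\sum_{Q\in\mathcal{D}} |D_Q b|^2 \angles{f}_Q^2 1_Q\Big)^{1/2}\Big\|_{L^p(\mu)}.
$$

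Second, I would verify that the coefficients $a_Q := \int_Q |D_Q b|^2\, d\mu$ form a Carleson sequence. By orthogonality of martingale differences restricted to a cube $R$,
$$
\sum_{Q\subseteq R} a_Q = \|1_R (b-\angles{b}_R)\|_{L^2(\mu)}^2 \leq \|b\|_{\bmo^2}^2\, \mu(R).
$$
The dyadic Carleson embedding theorem would then control the square function on the right-hand side by a multiple of $\|b\|_{\bmo^2} \|M^{\mathcal{D}} f\|_{L^p(\mu)}$, where $M^{\mathcal{D}}$ is the dyadic maximal operator, and $M^{\mathcal{D}}$ is bounded on $L^p(\mu)$. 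The comparability $\|b\|_{\bmo^p}\sim_p \|b\|_{\bmo^2}$ needed to pass from the $\bmo^2$ Carleson bound to the assumed $\bmo^p$ control is supplied by the dyadic John--Nirenberg inequality, which holds on any filtered $\sigma$-finite measure space.

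The main obstacle I anticipate is checking that each of the three classical ingredients --- the martingale Littlewood--Paley inequality, the $L^p$ Carleson embedding, and John--Nirenberg --- truly operates on $(\mathbb{R}^d,\mu)$ for a general locally finite Borel $\mu$, where dyadic cubes may degenerate to $\mu$-null sets and a child may have the same $\mu$-mass as its parent. This is handled by passing to the filtration restricted to cubes of positive measure, using the convention $\angles{b}_Q := 0$ when $\mu(Q)=0$, and invoking the fact that the martingale machinery applies abstractly to any $\sigma$-finite filtered measure space; no geometric features beyond those of the dyadic filtration are used.
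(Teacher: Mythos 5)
Your outline is plausible at the level of tools — Burkholder's square function, Carleson embedding, John--Nirenberg — but it contains a genuine gap in the last step, precisely where you invoke the equivalence $\|b\|_{\bmo^p}\sim_p\|b\|_{\bmo^2}$. You justify this by asserting that the dyadic John--Nirenberg inequality ``holds on any filtered $\sigma$-finite measure space.'' That assertion is false in the generality of this theorem: for a locally finite Borel measure $\mu$ on $\br^d$ a dyadic child may carry an arbitrarily small fraction of the mass of its parent, and in that non-doubling situation the $L^p$-normalized dyadic $\bmo$-norms are \emph{not} comparable. Indeed, taking $b$ to be a tall indicator of a child $Q'$ with $\mu(Q')/\mu(Q)=\varepsilon\to 0$ shows $\|b\|_{\bmo^p}\not\lesssim\|b\|_{\bmo^2}$ for $p>2$ (and the reverse failure for $p<2$). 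This is exactly why the paper's own John--Nirenberg corollary is stated and proved only under the hypothesis that $\mu$ is \emph{dyadically doubling}, and why the doubling constant appears in Lemma~\ref{lem:JN2}. So your argument, as written, proves the theorem for dyadically doubling $\mu$ but not for a general locally finite Borel measure, which is what the statement claims.

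The paper sidesteps John--Nirenberg entirely by using the $L^p$ \emph{testing} characterization of the paraproduct norm — Lemma~\ref{lem:OpNorm:Carleson}, taken from Remark~4 of \cite{hanninenhytonen2016} — which reads
$$
\norm{P_{b,\mathcal{Q}}}_{L^p(\mu)\to L^p(\mu)} \sim_p \sup_{Q\in\mathcal{Q}}\frac{1}{\mu(Q)^{1/p}}\Big\|\sum_{R\in\mathcal{Q}:\,R\subseteq Q} D_R b\Big\|_{L^p(\mu)}.
$$
Specializing to $\mathcal{Q}=\mathcal{D}$, the inner sum telescopes to $1_Q(b-\langle b\rangle_Q)$ and the right-hand side \emph{is} $\|b\|_{\bmo^p}$ — no passage through $\bmo^2$ is ever needed. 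Your route could be repaired along the same lines: run the stopping-time/principal-cube Carleson argument directly in $L^p$, measuring the square function of the pruned martingale on each stopping cube $F$ by the local $L^p$ testing constant (equivalently the $\bmo^p$ norm) rather than by the $\bmo^2$ Carleson constant. Two smaller points, neither fatal: (i) the pointwise square function $\sum_Q |D_Q b|^2|\langle f\rangle_Q|^2 1_Q$ does not literally have the normalized form $\sum_Q a_Q|\langle f\rangle_Q|^2 1_Q/\mu(Q)$ required by the scalar Carleson embedding you cite, and one has to use that $D_Q b$ is constant on children (or an equivalent martingale-subordination form of the embedding); (ii) routing through the maximal function $M^{\mathcal{D}}f$ is an unnecessary detour, as the embedding bounds directly by $\|f\|_{L^p(\mu)}$.
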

For a short proof of this comparison, see for example Remark 4 in \cite{hanninenhytonen2016}.

When the underlying measure is the Lebesgue-measure $\mathcal{L}$ on the unit interval $[0,1]$, it was remarked in \cite{ChaoLi96} that the \emph{compactness} of dyadic paraproducts is characterized by the condition that the mean oscillations along small dyadic cubes vanish,
$$
\lim_{M\to \infty} \sup_{\substack{Q\in\mathcal{D}:\\\ell(Q)\leq 1/M}} \osc=0
$$Since the unit interval is bounded, the large and far cubes do not appear. However, it is immediate that the proof in \cite{ChaoLi96} works also in the space $\mathbb{R}^d$, now requiring that the mean oscillations vanish also along large and far cubes,
$$
\lim_{M\to \infty} \sup_{\substack{Q\in\mathcal{D}:\\ \ell(Q)\geq M}} \osc=0
$$
and
$$
\lim_{M\to \infty} \sup_{\substack{Q\in\mathcal{D}:\\ \dist(0,Q) \geq M}} \osc=0.
$$
As usual, it is said that $b\in \vmo^p(\mathbb{R}^d,\mathcal{D},\mathcal{L})$ if $b$ satisfies the above three conditions.

\begin{theorem}[\cite{ChaoLi96}] Let $p\in(1,\infty).$ Then $P_{b,\mathcal{D}}:L^p(\mathbb{R}^d,\mathcal{L})\to L^p(\mathbb{R}^d,\mathcal{L})$ is compact if and only if $b\in\vmo^p(\mathbb{R}^d,\mathcal{D},\mathcal{L})$.
\end{theorem}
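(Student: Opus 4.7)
My plan is to deduce the necessity direction from the abstract admissibility framework established in the paper, and the sufficiency direction from approximation of $P_{b,\mathcal{D}}$ by finite-rank paraproducts in the equivalent $\bmo$ norm.

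\smallskip

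\textbf{Necessity.} Assume $P_{b,\mathcal{D}}\colon L^p(\br^d,\mathcal{L})\to L^p(\br^d,\mathcal{L})$ is compact. The control estimate \eqref{eq:bmoLWpara} (with $\mu=\mathcal{L}$) gives
$$ \|b-\langle b\rangle_Q\|_{\avL^p(Q)}\leq 2\,\|P_{b,\mathcal{D}}f_Q\|_{L^p},\qquad f_Q:=\tfrac{1_Q}{|Q|^{1/p}}. $$
It therefore suffices to show that the three sequences of sets $F_i^{\mathrm{large}}:=\{f_Q:Q\in\cq_i^{\mathrm{large}}\}$, $F_i^{\mathrm{small}}:=\{f_Q:Q\in\cq_i^{\mathrm{small}}\}$, $F_i^{\mathrm{distant}}:=\{f_Q:Q\in\cq_i^{\mathrm{distant}}\}$ are admissible in $L^p(\br^d,\mathcal{L})$. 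I invoke Theorem~\ref{thm:criteria} with $\phi(N):=N^{1/p}$: the improved triangle inequality $\|\sup_{n\leq N}f_n\|_{L^p}\leq N^{1/p}$ for disjointly supported functions of norm $\leq 1$ is the standard $L^p$-additivity, and $\phi(N)/N=N^{1/p-1}\to 0$ since $p>1$. Uniform boundedness is immediate from $\|f_Q\|_{L^p}=1$. For $F_i^{\mathrm{large}}$, the first bulleted criterion holds because $\|1_R f_Q\|_{L^p}=(|Q\cap R|/|Q|)^{1/p}\leq(|R|/|Q|)^{1/p}\to 0$ as $\ell(Q)\to\infty$. For $F_i^{\mathrm{small}}$, the second criterion holds because $\|1_{\supp(f_Q)}f_R\|_{L^p}=(|Q\cap R|/|R|)^{1/p}\to 0$ as $\ell(Q)\to 0$. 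For $F_i^{\mathrm{distant}}$, either criterion vanishes identically once $\dist(0,Q)$ exceeds $\diam(R)$. Admissibility together with the control estimate then yields $\sup_{Q\in\cq_{T,i}}\|b-\langle b\rangle_Q\|_{\avL^p(Q)}\to 0$ for each of the three collections, i.e. $b\in\vmo^p(\br^d,\mathcal{D},\mathcal{L})$.

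\smallskip

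\textbf{Sufficiency.} Assume $b\in\vmo^p(\br^d,\mathcal{D},\mathcal{L})$. For each $N\in\bn$ set $\mathcal{D}_N:=\{Q\in\mathcal{D}:N^{-1}\leq\ell(Q)\leq N,\ \dist(0,Q)\leq N\}$, a \emph{finite} subcollection of cubes, and define $b_N:=\sum_{Q\in\mathcal{D}_N}D_Q b$. Since $P_{b_N,\mathcal{D}}=\sum_{Q\in\mathcal{D}_N}\bigl(f\mapsto\langle f\rangle_Q\cdot D_Q b\cdot 1_Q\bigr)$ is a finite sum of rank-one operators, it is of finite rank and thus compact. Compact operators form a norm-closed subspace of $\mathcal{B}(L^p,L^p)$, so it suffices to show $\|P_{b,\mathcal{D}}-P_{b_N,\mathcal{D}}\|_{L^p\to L^p}\to 0$. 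By linearity in the symbol and by the Meyer--Chao--Lotz comparison $\|P_c\|_{L^p\to L^p}\sim_p\|c\|_{\bmo^p_\mathcal{D}}$, this reduces to $\|b-b_N\|_{\bmo^p_\mathcal{D}}\to 0$, which I will verify from the VMO assumption.

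\smallskip

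\textbf{Main obstacle.} The one step that needs genuine work is the last one: showing $\|b-b_N\|_{\bmo^p_\mathcal{D}}\to 0$. Using that for any cube $R\in\mathcal{D}$ the restriction to $R$ of $b-b_N-\langle b-b_N\rangle_R$ coincides with the martingale sum $\sum_{Q\subseteq R,\,Q\notin\mathcal{D}_N}D_Q b$, one splits cases on whether $R$ itself lies in $\mathcal{D}_N$ or not. If $R\in\mathcal{D}_N$, then every bad $Q\subseteq R$ satisfies $\ell(Q)<1/N$, and the truncation on $R$ equals $b-\mathbb{E}[b\mid\mathcal{F}_N^R]=\sum_P (b-\langle b\rangle_P)1_P$ over the maximal good dyadic subcubes $P$ of $R$; its $\avL^p(R)$-norm is bounded by $\sup\{\|b-\langle b\rangle_P\|_{\avL^p(P)}:\ell(P)<1/N\}$, which tends to $0$ by vanishing oscillation along small cubes. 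If $R\notin\mathcal{D}_N$, i.e. $R$ is itself large, small, or distant, one bounds the oscillation of $b-b_N$ over $R$ by combining $\|b-\langle b\rangle_R\|_{\avL^p(R)}$ (small by VMO for $R$) with the contribution of the finitely many ancestors $Q\supsetneq R$ in $\mathcal{D}_N$, whose contribution at scale $R$ telescopes into a constant and thus disappears from the oscillation. The book-keeping across the three regimes is routine but is the one place the argument actually uses more than the paper's abstract machinery.
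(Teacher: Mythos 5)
Your necessity argument is sound and is essentially the same route the paper takes (Theorem~\ref{thm:para:necessity} specialized to Lebesgue measure, where heavy/light coincide with large/small): use \eqref{eq:bmoLWpara} to reduce to admissibility of the three families $\{1_Q/|Q|^{1/p}\}$, then apply Theorem~\ref{thm:criteria} with $\phi(N)=N^{1/p}$. One small slip: for distant cubes, $Q\cap R=\emptyset$ once $\dist(0,Q)$ exceeds $\dist(0,R)+\diam(R)$, not merely $\diam(R)$; since $R$ is fixed this is harmless.

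The sufficiency argument follows a genuinely different organizing principle than the paper. You collapse all three reductions into a single symbol truncation $b\mapsto b_N$ and aim for $\|b-b_N\|_{\bmo^p_{\mathcal{D}}}\to 0$, whereas the paper (Lemmas~\ref{lem:OpNorm:BoundedMeasure}--\ref{lem:OpNorm:InnerCubes}) peels off heavy/light, then distant, then finishes by finite-rank truncation, using Lemma~\ref{lem:OpNorm:Carleson} to control each discarded collection. Both strategies rest on the fact that compact operators are norm-closed; yours buys a shorter skeleton, while the paper's sequential decomposition buys cleaner bookkeeping at each stage and, importantly, an argument that survives to general measures.

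The gap is precisely in the bookkeeping you call ``routine.'' Two concrete problems. First, for $R\in\mathcal{D}_N$ it is not true that every bad $Q\subsetneq R$ is small: a cube $Q\subsetneq R$ with $1/N\leq\ell(Q)\leq N$ can satisfy $\dist(0,Q)>N$ even though $\dist(0,R)\leq N$. This particular error is repairable, since $\mathcal{D}\setminus\mathcal{D}_N$ restricted to $\mathcal{D}(R)$ is still a down-set (its elements are small or far, both hereditary downward), and the maximal bad cubes $P$ are small or far, hence $\operatorname{osc}(P)$ is small by either VMO condition. Second, and more seriously, your treatment of $R\notin\mathcal{D}_N$ only handles $R$ small or far. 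When $R$ is large, the set $\{Q\subsetneq R: Q\notin\mathcal{D}_N\}$ is \emph{not} all of $\mathcal{D}(R)$ and is neither a down-set nor an up-set; the ``ancestors telescope to a constant'' remark is true but irrelevant, because the obstruction comes from the good cubes $Q\in\mathcal{D}_N$ lying \emph{inside} $R$, which must be subtracted and controlled. The correct estimate requires splitting the bad subcubes of $R$ into the up-set of large cubes (whose sum telescopes to $\langle b\rangle_{P(x)}-\langle b\rangle_R$ and is bounded by $\operatorname{osc}(R)$, small since $R$ is large) and the down-set of small-or-far cubes (bounded by $\sup_P\operatorname{osc}(P)$ over its maximal elements, small by VMO). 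This case is genuinely absent from your sketch, so as written the proof does not close.
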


\subsection{Vanishing mean oscillations on general measure spaces}
The metric concepts of side-length and of distance (which appear in the above definition of the $\vmo$ space) are absent on general measure spaces. Even for a general Borel measure on the metric space $\br^d$, one expects no comparability between the mean oscillation over the cube and the side-length of the cube because the measure and the side-length of the cube are in general incomparable (e.g. point masses and zero measures). The vanishing mean oscillations compatible with general measure spaces are introduced as follows:

\begin{definition}[Vanishing mean oscillations for general measures]\label{defn:VMOmu} Let $\mu$ be a locally finite Borel measure on $\br^d$. A function $b\in\bmo^p(\mathbb{R}^d,\mathcal{D},\mu)$
 belongs to the dyadic $\vmo^p(\br^d,\mathcal{D},\mu)$ space provided that its mean oscillations vanish along heavy, light, and distant cubes:
\begin{itemize}
\item (Heavy cubes)
$$
\lim_{M\to \infty} \sup_{\substack{Q\in\mathcal{D}:\\ \mu(Q)\geq M}} \osc=0,
$$
\item (Light cubes)
$$
\lim_{M\to \infty} \sup_{\substack{Q\in\mathcal{D}:\\\mu(Q)\leq 1/M}} \osc=0,
$$
\item (Distant cubes) 
$$
\lim_{M\to \infty} \sup_{\substack{Q\in\mathcal{D}:\\ \dist(0,Q) \geq M}} \osc=0.
$$
\end{itemize}
\end{definition}

\begin{remark}[A priori assumptions on the $\vmo$-symbols] We have defined the space $\vmo^p(\mathbb{R},\mathcal{D},\mu)$ as a subspace of $\bmo^p(\mathbb{R}^d,\mathcal{D},\mu)$. On the level of Theorems \ref{thm:para:necessity} and \ref{thm:para:sufficiency} this definition indeed exactly captures when the dyadic paraproduct is compact, without any extra background assumptions. However, the a priori weaker assumption $b\in L^1_{\operatorname{loc}}(\mathbb{R}^d,\mu)$ would suffice, as it together with the three vanishing conditions (heavy, light and distant cubes) implies that in fact $b\in\vmo^p(\mathbb{R}^d,\mathcal{D},\mu).$ This implication follows from splitting the collection of all cubes into the same collections as is done in the proof of Theorem \ref{thm:para:sufficiency} below. The details are left to the interested reader. 
\end{remark}

Whereas the notions of heavy and light cubes are defined by means of measure, the notion of distant cubes is still defined by means of metric in Definition \ref{defn:VMOmu}. An equivalent, solely measure-theoretical substitute for distant cubes is provided in the following remark. However,  for simplicity of exposition, we work mostly with the hybrid measure-theoretic-metric definition of Definition \ref{defn:VMOmu}.
\begin{remark}[Solely measure-theoretical definition for $\vmo$] Let $\Omega$ be a set. Let $(\mathcal{D}_k)_{k=-\infty}^{+\infty}$ be a sequence of refining partitions by countably many sets. Set $\mathcal{D}:=\bigcup_{k=-\infty}^\infty \mathcal{D}_k$ and $\cf:=\sigma(\mathcal{D})$. Let $\mu:\cf\to[0,+\infty]$ be a  measure. Assume that $\mu$ is locally finite in the sense that $\mu(Q)<+\infty$ for every $Q\in\mathcal{D}$. In this more general setting, the generalization of the vanishing mean oscillation along distant cubes is the condition
$$
\lim_{M\to \infty} \sup_{\substack{Q\in\mathcal{D}:\\Q\subseteq \Omega_M^c}} \osc=0,
$$
where $(\Omega_M)_{M=1}^\infty$ is some fixed sequence of sets such that $\Omega_M\uparrow \Omega$ and such that each $\Omega_M$ is a finite union of sets in $\mathcal{D}$. The corresponding space $\vmo^p(\Omega,\mathcal{D},\mu)$ is independent of the specific choice of such sequence $\Omega_M$. Moreover, the metric definition on $\br^d$ is recovered through any sequence $\Omega_M$ with
$B(0,M-1) \subseteq \Omega_M \subseteq B(0,M)$.
\end{remark}

\subsection{Necessity}
The necessity of $b\in\vmo^p(\mathbb{R}^d,\mathcal{D},\mu)$ for the $L^p(\br^d,\mu)$ compactness of the dyadic paraproduct $P_{\mathcal{D},b}$ is immediate by the generic machinery. 
\begin{theorem}[Necessity]\label{thm:para:necessity}Let $\mu$ be a locally finite Borel measure on $\br^d$ and $b\in L^1_{\operatorname{loc}}(\mathbb{R}^d,\mu)$. If $P_{b,\mathcal{D}}:L^p(\mathbb{R}^d,\mu)\to L^p(\mathbb{R}^d,\mu)$ is compact, then
$
b\in \vmo^p(\mathbb{R}^d,\mathcal{D},\mu).
$
\end{theorem}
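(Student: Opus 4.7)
The plan is to combine the control estimate \eqref{eq:bmoLWpara} with the generic admissibility machinery of Theorem \ref{thm:criteria}. For each of the three cube collections
\begin{align*}
\cq_M^{\mathrm{heavy}} &:= \{Q\in \mathcal{D} : \mu(Q)\geq M\}, \\
\cq_M^{\mathrm{light}} &:= \{Q\in \mathcal{D} : \mu(Q)\leq 1/M\}, \\
\cq_M^{\mathrm{distant}} &:= \{Q\in\mathcal{D} : \dist(0,Q)\geq M\},
\end{align*}
I introduce the sequence of sets $F_M := \{f_Q : Q\in \cq_M\}$ with $f_Q := 1_Q/\mu(Q)^{1/p}$ (only cubes of positive measure contribute, which is harmless since by convention $\|b-\langle b\rangle_Q\|_{\avL^p(Q)}=0$ whenever $\mu(Q)=0$). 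Compactness of $P_{b,\mathcal{D}}$ implies its boundedness, hence the characterization of boundedness upgrades $b\in L^1_{\operatorname{loc}}$ to $b\in \bmo^p(\mathbb{R}^d,\mathcal{D},\mu)$, so it suffices to verify the three vanishing conditions of Definition \ref{defn:VMOmu}. The control estimate \eqref{eq:bmoLWpara} reduces this to
$$
\lim_{M\to \infty}\sup_{Q\in\cq_M}\norm{P_{b,\mathcal{D}}f_Q}_{L^p(\mathbb{R}^d,\mu)}=0
$$
for each of the three collections, which, since $P_{b,\mathcal{D}}$ is compact, reduces to the admissibility of each sequence $(F_M)$.

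The space $L^p(\mu)$ with $p\in(1,\infty)$ satisfies the improved triangle inequality with $\phi(N):=N^{1/p}$, since $N$ disjointly supported functions of unit $L^p$-norm satisfy $\norm{\sup_n f_n}_{L^p(\mu)}^p=\sum_n \norm{f_n}_{L^p(\mu)}^p\leq N$; the decay $\phi(N)/N\to 0$ holds because $p>1$. Uniform boundedness of $F_M$ is automatic since $\norm{f_Q}_{L^p(\mu)}=1$. Only the support criterion of Theorem \ref{thm:criteria} remains to be verified, and the elementary identities
$$
\norm{1_R f_Q}_{L^p(\mu)} = \Big(\frac{\mu(R\cap Q)}{\mu(Q)}\Big)^{1/p},\qquad \norm{1_Q f_R}_{L^p(\mu)} = \Big(\frac{\mu(R\cap Q)}{\mu(R)}\Big)^{1/p}
$$
make this verification straightforward.

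I check the support criterion separately for each collection. For heavy cubes I use the first bullet of Theorem \ref{thm:criteria}: for fixed $f_R\in \bigcup_M F_M$ and arbitrary $f_{Q_M}\in F_M$ with $\mu(Q_M)\geq M$,
$$
\norm{1_{\supp f_R}f_{Q_M}}_{L^p(\mu)} \leq \Big(\frac{\mu(R)}{M}\Big)^{1/p}\to 0.
$$
For light cubes I use the second bullet: $\norm{1_{\supp f_{Q_M}}f_R}_{L^p(\mu)} \leq \big(M^{-1}\mu(R)^{-1}\big)^{1/p}\to 0$. For distant cubes, the fixed dyadic cube $R$ is bounded, so once $M$ exceeds $\sup_{x\in R}|x|$ any $Q_M$ with $\dist(0,Q_M)\geq M$ is disjoint from $R$, and both support quantities vanish identically.

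There is no serious obstacle: the argument is essentially the assembly of the abstract admissibility machinery with the control estimate and the elementary volume comparisons above. The only conceptual subtlety is invoking the boundedness characterization to upgrade $b\in L^1_{\operatorname{loc}}$ to $b\in\bmo^p$, so that the three vanishing conditions indeed land $b$ in $\vmo^p(\mathbb{R}^d,\mathcal{D},\mu)$ as defined.
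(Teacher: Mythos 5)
Your proof is correct and follows essentially the same route as the paper: invoke the control estimate \eqref{eq:bmoLWpara} together with the boundedness characterization to get $b\in\bmo^p$, then apply Theorem \ref{thm:criteria} with $\phi(N)=N^{1/p}$ to the normalized indicator sequences $\{1_Q/\mu(Q)^{1/p}\}_{Q\in\cq_M}$ for heavy, light, and distant cubes, verifying the support criterion by the same elementary volume comparisons $(\mu(R\cap Q)/\mu(Q))^{1/p}$. The only cosmetic differences are in which bullet of Theorem \ref{thm:criteria} is explicitly cited for each collection (the two support quantities are symmetric anyway) and your slightly more explicit treatment of the disjointness for distant cubes.
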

\begin{proof}
As compact operators are bounded, the inclusion $b\in\bmo^p(\mathbb{R}^d,\mathcal{D},\mu)$ follows immediately from \eqref{eq:bmoLWpara}, stating that for all $Q\in\mathcal{D}:$
\begin{align}\label{eq:bmoLWparaX}
\lpq{b-\angles{b}_Q}\leq 2\Big\| P_b\frac{1_Q}{\mu(Q)^{1/p}}  \Big\|_{L^p(\mathbb{R}^d,\mu)}.
\end{align}
To check the vanishing conditions, we note that
the space $L^p(\mathbb{R}^d,\mu)$ has geometric structure as in Theorem \ref{thm:criteria} for checking admissibility of sequences, because
$$
\norm{\sup_{n=1,\ldots,N} f_n}_{L^p(\mathbb{R}^d,\mu)}\leq \sup_{n=1,\ldots,N} \norm{f_n}_{L^p(\mathbb{R}^d,\mu)}  N^{1/p}.
$$ 
For a given cube $Q\in\mathcal{D}$ let us denote 
\begin{align*}
    f_Q := 
    \begin{cases}
        1_Q/\mu(Q)^{1/p},\quad &\mu(Q)>0 \\
        0,\quad&\mbox{else.}
    \end{cases}
\end{align*}
Provided $f_Q\not = 0,$ there holds that 
\begin{align*}
    \| 1_{\supp(f_R)} f_Q \|_{L^p(\mathbb{R}^d,\mu)} \leq \Big\|1_R\frac{1_Q}{\mu(Q)^{1/p}}\Big\|_{L^p(\mathbb{R}^d,\mu)} = \Big(\frac{\mu(R\cap Q)}{\mu(Q)}\Big)^{1/p}.
\end{align*}
Now, we check that the following sequences 
meet an admissibility criteria. 
\begin{itemize}
\item The sequence $\{f_Q\not = 0 : \mu(Q)\geq M\}$ of sets of functions meets the criterion
\begin{align*}
    \lim_{\mu(Q)\to \infty}  \| 1_{\supp(f_R)} f_Q \|_{L^p(\mathbb{R}^d,\mu)}  \leq \lim_{\mu(Q)\to \infty}\Big(\frac{\mu(R\cap Q)}{\mu(Q)}\Big)^{1/p}=0.
\end{align*}
\item The sequence $\{f_Q\not = 0 : \mu(Q)\leq M^{-1}\}$ of sets of functions meets the criterion
$$
\lim_{\mu(R)\to 0}  \| 1_{\supp(f_R)} f_Q \|_{L^p(\mathbb{R}^d,\mu)} \leq \lim_{\mu(R)\to 0}  \Big(\frac{\mu(R\cap Q)}{\mu(Q)}\Big)^{1/p}=0.
$$
\item The sequence $\{f_Q\not = 0:  \dist(0,Q)\geq M\}$ of sets of functions meets the criterion
$$
\lim_{\dist(R,0)\to \infty}  \| 1_{\supp(f_R)} f_Q \|_{L^p(\mathbb{R}^d,\mu)}  \leq \lim_{\dist(R,0)\to \infty} \Big(\frac{\mu(R\cap Q)}{\mu(Q)}\Big)^{1/p}= 0.
$$
\end{itemize}
The proof is completed by \eqref{eq:bmoLWparaX}.
\end{proof}

\subsection{Sufficiency}
In this section we prove the converse to Theorem \ref{thm:para:necessity}
\begin{theorem}[Sufficiency]\label{thm:para:sufficiency}Let $\mu$ be a locally finite Borel measure on $\br^d.$ If $b\in \vmo^p(\mathbb{R}^d,\mathcal{D},\mu),$ then $P_{b,\mathcal{D}}:L^p(\mathbb{R}^d,\mu)\to L^p(\mathbb{R}^d,\mu)$ is compact.
\end{theorem}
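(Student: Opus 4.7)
The plan is to exhibit $P_{b,\mathcal{D}}$ as an operator-norm limit of compact operators by truncating the Haar expansion of $b$ to ``good'' cubes. For each $M\in\mathbb{N}$ set
$$
\mathcal{D}_M^{\mathrm{good}} := \bigl\{Q\in\mathcal{D} : \tfrac{1}{M}<\mu(Q)<M,\ \dist(0,Q)<M,\ 2^{-M}<\ell(Q)<2^M\bigr\},
$$
a finite collection of dyadic cubes, and put $b_M := \sum_{Q\in\mathcal{D}_M^{\mathrm{good}}}D_Qb$. The claim will follow from (a) compactness of each $P_{b_M,\mathcal{D}}$, and (b) $\|P_{b,\mathcal{D}}-P_{b_M,\mathcal{D}}\|_{L^p(\mu)\to L^p(\mu)}\to 0$ as $M\to\infty$. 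Item (a) is immediate: $P_{b_M,\mathcal{D}}f=\sum_{Q\in\mathcal{D}_M^{\mathrm{good}}}D_Qb\cdot\langle f\rangle_Q 1_Q$ is a finite sum, and its range lies in the finite-dimensional span of indicators of children of cubes in $\mathcal{D}_M^{\mathrm{good}}$; hence $P_{b_M,\mathcal{D}}$ is finite-rank.

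For (b), the boundedness-theorem equivalence $\|P_{b-b_M,\mathcal{D}}\|_{L^p\to L^p}\sim_p\|b-b_M\|_{\bmo^p(\mathbb{R}^d,\mathcal{D},\mu)}$ reduces the task to showing $\|b-b_M\|_{\bmo^p}\to 0$. For $R\in\mathcal{D}$ the martingale identity yields
$$
(b-b_M)-\langle b-b_M\rangle_R \;=\; \sum_{Q\subseteq R,\,Q\notin\mathcal{D}_M^{\mathrm{good}}} D_Q b,
$$
and the aim is to bound this in $\avL^p(R)$-norm uniformly in $R$ by a quantity tending to zero with $M$. If $R$ is itself heavy, light, or distant, the $L^p$-boundedness of dyadic martingale transforms (Burkholder, $p\in(1,\infty)$) controls this sum by $C_p\|b-\langle b\rangle_R\|_{\avL^p(R)}$, which is small by the matching VMO hypothesis. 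If $R\in\mathcal{D}_M^{\mathrm{good}}$, the monotonicities $\mu(Q)\le\mu(R)<M$ and $\ell(Q)\le\ell(R)<2^M$ for $Q\subseteq R$ exclude heavy and too-large bad sub-cubes; passing to the maximal pairwise-disjoint sub-cubes $\{P_j\}\subsetneq R$ along which goodness first fails and using the heredity of lightness, distance and too-small scale, the identity $\sum_{Q\subseteq P_j}D_Q b=(b-\langle b\rangle_{P_j})1_{P_j}$ telescopes the tail into $\sum_j(b-\langle b\rangle_{P_j})1_{P_j}$, whose $\avL^p(R)$-norm is controlled by $\sup_j\|b-\langle b\rangle_{P_j}\|_{\avL^p(P_j)}$.

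The main obstacle is the treatment of those maximal sub-cubes $P_j$ that fail goodness only through the scale bound $\ell(P_j)<2^{-M}$, since this condition is not among the three VMO hypotheses. A dedicated argument is needed here: either one chooses the scale cutoff adaptively to the local $\mu$-mass profile so that all such $P_j$ are forced to be light, or one estimates directly, exploiting that maximal too-small $P_j$ have side length exactly $2^{-M-1}$, that their pairwise disjointness gives $\sum_j\mu(P_j)\le\mu(R)$, and that the lower bound $\mu(P_j)\ge 1/M$ coming from non-lightness yields an $M$-dependent coverage estimate vanishing with $M$. Once this point is resolved, both cases deliver $\|b-b_M\|_{\bmo^p}\to 0$, and $P_{b,\mathcal{D}}$ is compact as a norm limit of the compact operators $P_{b_M,\mathcal{D}}$.
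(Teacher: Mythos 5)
Your overall strategy---exhibit $P_{b,\mathcal{D}}$ as an operator-norm limit of finite-rank truncations $P_{b_M,\mathcal{D}}$ and bound the error by the boundedness theorem $\|P_{b-b_M,\mathcal{D}}\|\sim\|b-b_M\|_{\bmo^p}$---is natural, and your Case~1 (Burkholder's martingale transform bound) is sound when $R$ itself is heavy, light, or distant. But the truncation by side length, which you need to make $\mathcal{D}_M^{\mathrm{good}}$ finite, introduces ``bad'' cubes that none of the three VMO conditions of Definition~\ref{defn:VMOmu} controls, and this is a genuine gap that you flag but do not close. Concretely: the target estimate $\|b-b_M\|_{\bmo^p}\to 0$ requires smallness of $\|\sum_{Q\subseteq R,\,Q\notin\mathcal{D}_M^{\mathrm{good}}}D_Qb\|_{\avL^p(R)}$ uniformly over \emph{all} $R\in\mathcal{D}$, and for a cube $R$ with $\ell(R)<2^{-M}$ but $\mu(R)\in(1/M,M)$ and $\dist(0,R)<M$ (e.g., a small cube carrying a point mass of $\mu$), every sub-cube $Q\subseteq R$ fails the scale bound, so the tail equals the full oscillation $b-\langle b\rangle_R$ on $R$---which VMO gives no reason to be small. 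The symmetric problem occurs for $R$ with $\ell(R)>2^M$ but moderate measure and small distance. In short, for a general locally finite $\mu$ the quantity $\|b-b_M\|_{\bmo^p}$ need not tend to zero: your target is strictly stronger than what $b\in\vmo^p(\mu)$ gives.

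Neither of the two repairs you sketch resolves this. The adaptive-scale idea (``choose the cutoff so fine that every too-small cube near the origin becomes light'') fails whenever $\mu$ has an atom: every cube containing the atom has measure bounded below, no matter how small the scale. The counting idea (at most $M^2$ maximal too-small sub-cubes $P_j$, by $\mu(P_j)\ge 1/M$ and $\sum_j\mu(P_j)\le\mu(R)\le M$) bounds the \emph{number} of offending cubes, but the $\avL^p(R)$-norm contribution of $\sum_j(b-\langle b\rangle_{P_j})1_{P_j}$ is of order $\bigl(\mu(R)^{-1}\sum_j\mu(P_j)\bigr)^{1/p}\|b\|_{\bmo^p}$, which stays bounded away from zero whenever the $P_j$ cover a fixed fraction of $R$ (again, an atom produces this). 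The paper's proof avoids this entirely: after reducing, via the heavy/light conditions, to cubes of measure $\sim1$, it does \emph{not} impose a scale truncation to force finiteness. Instead it uses the distant condition to localize to a finite-$\mu$-measure region, and then handles the remaining scales structurally---for fine scales, finiteness of each $\mathcal{D}_K$-layer within a finite-measure set plus the Lebesgue differentiation theorem (Lemma~\ref{lem:OpNorm:InnerCubes}); for coarse scales, the chain structure together with convergence of averages $\langle b\rangle_{Q_k}$ along the chain, which follows from $b\in\bmo^p$ and $\mu(Q_k)\sim1$ alone (Lemma~\ref{lem:OpNorm:OuterCubes}). These two mechanisms, not an extra VMO-at-scale hypothesis, are what supply the vanishing at small and large scales; your argument needs them (or a substitute), and as written it does not have them.
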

The proof of Theorem \ref{thm:para:sufficiency} proceeds by reducing the initial collection of dyadic cubes finitely many times (with the number of reductions bounded by $C$), at each step paying a price of an $\varepsilon$ in the operator norm, until the operator corresponding to the remaining collection is of some finite rank $K(\varepsilon)$ and hence compact. The rank $K(\varepsilon)$ may blow up as $\varepsilon$ tends to zero, but the bound $C$ for the number of reductions is absolute, not depending on $\varepsilon.$  Each reduction step is formalized in the following lemma:
\begin{lemma}[Reduction step]\label{lem:reduce}Let $\mathcal{Q}$ be a collection of dyadic cubes. Let $X$ and $Y$ be Banach function spaces. Assume that for every $\varepsilon>0$ there is a subcollection $\ce(\varepsilon)\subseteq \mathcal{Q}$ such that in the corresponding decomposition
$$
P_{\mathcal{Q}}:=\sum_{Q\in\mathcal{Q}} P_Q=\sum_{Q\in \mathcal{Q}\setminus\ce(\varepsilon)}P_Q+\sum_{Q\in\ce(\varepsilon)}P_Q=:P_{\mathcal{Q}\setminus\ce(\varepsilon)}+P_{\ce(\varepsilon)}
$$
the operator $P_{\mathcal{Q}\setminus\ce(\varepsilon)}:X\to Y$ is compact and $\norm{P_{\ce(\varepsilon)}}_{X\to Y}\leq \varepsilon.$ Then, the operator $P_\mathcal{Q}:X\to Y$ is compact.
\end{lemma}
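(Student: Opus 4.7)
The plan is to observe that the lemma is essentially an application of the well-known fact that the space $\mathcal{K}(X,Y)$ of compact linear operators from $X$ to $Y$ is a closed subspace of the bounded operators $\mathcal{B}(X,Y)$ in the operator-norm topology. Once this is noted, the hypothesis says precisely that $P_{\mathcal{Q}}$ lies in the closure of $\mathcal{K}(X,Y)$.

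First I would rewrite the given decomposition as
\begin{equation*}
P_{\mathcal{Q}} - P_{\mathcal{Q}\setminus \ce(\varepsilon)} = P_{\ce(\varepsilon)},
\end{equation*}
so that the hypothesis $\|P_{\ce(\varepsilon)}\|_{X\to Y}\leq \varepsilon$ translates into
\begin{equation*}
\|P_{\mathcal{Q}} - P_{\mathcal{Q}\setminus \ce(\varepsilon)}\|_{X\to Y}\leq \varepsilon.
\end{equation*}
Choosing $\varepsilon = 1/n$ for $n=1,2,\ldots$ yields a sequence $(P_{\mathcal{Q}\setminus \ce(1/n)})_n$ of compact operators converging to $P_{\mathcal{Q}}$ in operator norm.

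Next I would invoke the standard fact that the norm-limit of a sequence of compact operators between Banach spaces is compact. One can either cite this or sketch it in one line: given a bounded sequence $(x_m)\subset X$, a diagonal argument extracts a subsequence along which each $P_{\mathcal{Q}\setminus \ce(1/n)}x_m$ converges; an $\varepsilon/3$-argument combining this with the uniform approximation then shows that $(P_{\mathcal{Q}}x_m)$ has a Cauchy subsequence in $Y$, so $P_{\mathcal{Q}}$ is compact.

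There is no real obstacle here — the hypothesis has been tailored precisely so that $P_{\mathcal{Q}}$ is approximable in operator norm by compact operators, and the conclusion is then an immediate consequence of the closedness of the ideal of compact operators. The only point to double-check is that the decomposition $P_{\mathcal{Q}} = P_{\mathcal{Q}\setminus \ce(\varepsilon)} + P_{\ce(\varepsilon)}$ holds as an identity of bounded operators on $X$ for every $\varepsilon>0$, which is built into the statement.
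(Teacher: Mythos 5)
Your proof is correct and matches the paper's one-line argument: the paper also simply notes that the space of compact operators between Banach spaces is closed in the operator-norm topology, and the hypothesis exhibits $P_{\mathcal{Q}}$ as a norm-limit of compact operators. Your extra detail (the $\varepsilon/3$ diagonal argument) is standard and not needed beyond citing that closedness fact.
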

\begin{proof}The space of compact operators between Banach spaces is closed in the operator norm topology.
\end{proof}
 The operator norms of the discarded subcollections are controlled by the vanishing mean oscillation assumptions together with the following lemmata.
\begin{lemma}[Operator norm over any collection of cubes, see e.g. Remark 4 in \cite{hanninenhytonen2016}]\label{lem:OpNorm:Carleson} Let $\mathcal{Q}$ be any collection of dyadic cubes. Then,
$$
\norm{P_{b,\mathcal{Q}}}_{L^p(\mathbb{R}^d,\mu)\to L^p(\mathbb{R}^d,\mu)}\sim_ p \sup_{Q\in \mathcal{Q}} \frac{1}{\mu(Q)^{1/p}}\Big\| \sum_{\substack{R\in\mathcal{Q}:\\ R\subseteq Q}} D_Rb \Big\|_{L^p(\mathbb{R}^d,\mu)}. 
$$
\end{lemma}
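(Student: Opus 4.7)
The plan is to deduce the lemma from the known full-collection comparison $\norm{P_{b,\mathcal{D}}}_{L^p\to L^p}\sim_p\norm{b}_{\bmo^p(\mathbb{R}^d,\mathcal{D},\mu)}$ (the characterisation of boundedness stated above; cf.\ also Remark~4 of \cite{hanninenhytonen2016}) by identifying the restricted paraproduct with a full-collection paraproduct for a suitable symbol, together with a direct testing argument for the reverse bound.

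For the lower bound \emph{$\gtrsim_p$}, fix $Q\in\mathcal{Q}$ and test on the normalised indicator $f_Q:=1_Q/\mu(Q)^{1/p}$. Splitting $\sum_{R\in\mathcal{Q}}\langle f_Q\rangle_R\,D_R b\,1_R$ according to $R\subseteq Q$ versus $R\supsetneq Q$ (and using that $\langle f_Q\rangle_R=\mu(Q)^{-1/p}$ in the first case, while in the second case both $D_R b$ and $\langle f_Q\rangle_R$ are constant on the child of $R$ containing $Q$), a direct computation yields
\begin{align*}
1_Q\cdot P_{b,\mathcal{Q}}f_Q=\frac{1}{\mu(Q)^{1/p}}\Big(\sum_{R\in\mathcal{Q},\,R\subseteq Q}D_R b\;+\;c_Q\cdot 1_Q\Big)
\end{align*}
for some scalar $c_Q$. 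The first summand is supported in $Q$ and has zero mean on $Q$, since each $D_R b$ is a martingale difference integrating to zero on its cube $R$; the Jensen-type bound $\norm{g}_{L^p(\mu)}\le 2\norm{g+c\cdot 1_Q}_{L^p(\mu)}$ for such $g$ (the very step used to derive \eqref{eq:bmoLWpara}) therefore absorbs $c_Q$ and gives
\begin{align*}
\frac{1}{\mu(Q)^{1/p}}\Big\|\sum_{R\in\mathcal{Q},\,R\subseteq Q}D_R b\Big\|_{L^p(\mathbb{R}^d,\mu)}\le 2\norm{P_{b,\mathcal{Q}}}_{L^p\to L^p}.
\end{align*}

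For the upper bound \emph{$\lesssim_p$}, let $A$ denote the right-hand supremum of the lemma and assume $A<\infty$. The truncated martingale sums $b_N:=\sum_{R\in\mathcal{Q},\,2^{-N}\le\ell(R)\le 2^N}D_R b$ are uniformly bounded in $\bmo^p(\mathbb{R}^d,\mathcal{D},\mu)$ with norm at most a constant multiple of $A$, and a standard truncation/diagonal argument yields a limit $\tilde b$, modulo constants, satisfying $\norm{\tilde b}_{\bmo^p(\mathbb{R}^d,\mathcal{D},\mu)}\lesssim A$ together with $D_R\tilde b=D_R b\cdot 1_{R\in\mathcal{Q}}$ for every $R\in\mathcal{D}$. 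This identity of martingale differences forces the operator equality $P_{b,\mathcal{Q}}=P_{\tilde b,\mathcal{D}}$, so the full-collection comparison gives $\norm{P_{b,\mathcal{Q}}}_{L^p\to L^p}=\norm{P_{\tilde b,\mathcal{D}}}_{L^p\to L^p}\lesssim_p\norm{\tilde b}_{\bmo^p}\lesssim A$.

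The main obstacle is the construction of $\tilde b$ and the uniform $\bmo^p$ bound for the truncations $b_N$. A direct alternative, which avoids constructing $\tilde b$ altogether, proceeds by a principal-cube stopping-time decomposition of $f$: build a sparse collection $\mathcal{G}$ of cubes on which $|\langle f\rangle|$ more than doubles, group the paraproduct as $P_{b,\mathcal{Q}}f=\sum_{G\in\mathcal{G}}\sum_{R\in\mathcal{Q},\,\pi(R)=G}\langle f\rangle_R\,D_R b$ with $\pi(R)$ the closest stopping ancestor of $R$, and combine the pointwise bound $|\langle f\rangle_R|\lesssim|\langle f\rangle_{\pi(R)}|$ with the defining inequality $\norm{\sum_{R\subseteq G,\,R\in\mathcal{Q}}D_R b}_{L^p}\le A\mu(G)^{1/p}$ and the Carleson embedding $\sum_{G\in\mathcal{G}}|\langle f\rangle_G|^p\mu(G)\lesssim\norm{f}_p^p$.
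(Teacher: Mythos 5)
The paper itself gives no proof of this lemma; it is stated with a pointer to Remark~4 of \cite{hanninenhytonen2016}, so there is nothing internal to compare against. Your lower-bound argument is correct and is precisely the restricted-collection version of the computation proving \eqref{eq:bmoLWpara}: after restricting $P_{b,\mathcal{Q}}f_Q$ to $Q$, the cubes $R\supsetneq Q$ only contribute a constant multiple of $1_Q$, and the Jensen step $\lpq{g}\le 2\lpq{g+c1_Q}$ for mean-zero $g$ removes it.

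The upper bound is where both of your routes are genuinely incomplete, for the same underlying reason. In the first route, the uniform $\bmo^p$ bound for the truncations $b_N$ does \emph{not} follow by restriction: deleting the scales $\ell(R)\notin[2^{-N},2^N]$ from $\sum_{R\subseteq Q,\,R\in\mathcal{Q}}D_Rb$ can a priori increase the $L^p$ norm, so the hypothesis $A<\infty$ does not obviously transfer to $\mathcal{Q}\cap\mathcal{D}_{[-N,N]}$. One has to show that the testing condition is inherited by truncated subcollections; this works via a disjoint-maximal-cube argument (writing the deleted tails as disjointly supported sums over the maximal cubes of $\mathcal{Q}$ in the relevant scale range and summing $p$-th powers), but you have not supplied it, and without it the claim ``uniformly bounded in $\bmo^p$ with norm $\lesssim A$'' is unjustified. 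In the second route, the step from the pointwise coefficient bound $|\langle f\rangle_R|\lesssim|\langle f\rangle_{\pi(R)}|$ to an $L^p$ bound for $\sum_{R\in\mathcal{Q}:\,\pi(R)=G}\langle f\rangle_R D_Rb$ in terms of $A\mu(G)^{1/p}$ is exactly the nontrivial point: the coefficients $\langle f\rangle_R$ vary with $R$, so one cannot simply pull out $|\langle f\rangle_G|$ and invoke the defining inequality. You need an additional ingredient here --- either the $L^p$ boundedness of martingale transforms (Burkholder), or a duality argument, or the same disjoint-maximal-cube decomposition --- and the proposal, as written, does not name or supply any of them. Both routes are therefore on the right track but stop short precisely at the one step that is not routine.
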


\begin{definition}A collection $\mathcal{Q}\subseteq \mathcal{D}$ is called {\it connected} if for every pair $Q,R\in \mathcal{Q}$ the following holds: if $S\in\mathcal{D}$ with $Q\subseteq S\subseteq R$, then $S\in\mathcal{Q}$.
\end{definition}

\begin{lemma}[Operator norm over a connected collection of cubes]\label{lem:OpNorm:connected}Let $\mathcal{Q}$ be a connected collection of dyadic cubes. 
Then,
$$
\Big\| \sum_{\substack{R\in\mathcal{Q}:\\ R\subseteq Q}} D_Rb \Big\|_{L^p(\mathbb{R}^d,\mu)} \lesssim  \Big\| \sup_{  \substack{ R,S\in \mathcal{Q} \\ R\subseteq S\subseteq Q}} \abs{\angles{b}_R-\angles{b}_S} 1_S\Big\|_{L^p(\mathbb{R}^d,\mu)}.
$$
\end{lemma}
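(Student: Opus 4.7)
The plan is to reduce the inequality to a pointwise comparison of the integrands and then take $L^p(\mu)$ norms. Concretely, I will establish
\begin{equation*}
\Big|\sum_{\substack{R\in\mathcal{Q}\\ R\subseteq Q}} D_Rb(x)\Big| \lesssim \sup_{\substack{R,S\in\mathcal{Q}\\ R\subseteq S\subseteq Q}} \abs{\angles{b}_R - \angles{b}_S}\, 1_S(x)
\end{equation*}
for $\mu$-almost every $x$, after which taking the $L^p(\mu)$ norm on both sides yields the asserted estimate; both sides vanish outside $\bigcup_{R \in \mathcal{Q},\, R \subseteq Q} R$, so the analysis is inherently local.

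Fix such an $x$ and consider the subcollection $\mathcal{C}(x) := \{R \in \mathcal{Q} : R \subseteq Q,\ x \in R\}$. Any two dyadic cubes containing a common point are comparable under inclusion, so $\mathcal{C}(x)$ is totally ordered. Connectedness then forces $\mathcal{C}(x)$ to be a \emph{contiguous} dyadic chain $R_0 \supsetneq R_1 \supsetneq R_2 \supsetneq \cdots$, because any dyadic cube strictly between two members of $\mathcal{C}(x)$ must also lie in $\mathcal{Q}$ and contain $x$. Consequently each $R_{i+1}$ is the unique dyadic child of $R_i$ containing $x$, so $D_{R_i}b(x) = \angles{b}_{R_{i+1}} - \angles{b}_{R_i}$, and the sum over $\mathcal{C}(x)$ telescopes.

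The pointwise bound then follows by identifying the telescoped value with a difference of admissible averages and invoking the sup on the right: when $\mathcal{C}(x)$ is infinite, the partial sums converge to $b(x) - \angles{b}_{R_0}$ by the dyadic Lebesgue differentiation theorem, and this limit is reached along the sequence of admissible differences $\angles{b}_{R_n} - \angles{b}_{R_0}$ with $R_n, R_0 \in \mathcal{Q}$ and $x \in R_0 \subseteq Q$; when $\mathcal{C}(x)$ terminates at a minimal element $R_N$, the sum equals $\angles{b}_P - \angles{b}_{R_0}$, where $P$ is the dyadic child of $R_N$ containing $x$. I expect the main obstacle to be precisely this terminal case: the child $P$ typically does not belong to $\mathcal{Q}$, so the contribution $\angles{b}_P - \angles{b}_{R_N}$ is not literally of the form $\angles{b}_R - \angles{b}_S$ with $R, S \in \mathcal{Q}$. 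Handling it cleanly requires either absorbing the terminal term through Lebesgue differentiation, which is viable whenever $\mathcal{Q}$ contains arbitrarily fine sub-cubes through $x$ (the regime in which the lemma is invoked in Theorem \ref{thm:para:sufficiency}), or enlarging the sup by an auxiliary descendant of $R_N$ in $\mathcal{Q}$, guaranteed to exist by connectedness whenever $R_N$ is not truly minimal in $\mathcal{Q}$.
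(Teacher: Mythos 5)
Your strategy---pointwise telescoping of the martingale differences along the connected chain through each point---is essentially the one the paper uses; the paper first truncates to scales $\mathcal{D}_{[-L,L]}$ and lets $L\to\infty$, but the mechanism is the same. You have, however, put your finger on a genuine issue. After telescoping along the contiguous chain $R_0\supsetneq R_1\supsetneq\cdots\supsetneq R_N$ in $\mathcal{Q}$ through $x$, the residual is $\angles{b}_{P}-\angles{b}_{R_0}$ where $P=P(x,R_N)$ is the dyadic child of the minimal chain cube $R_N$ containing $x$; since typically $P\notin\mathcal{Q}$, the term $\angles{b}_P-\angles{b}_{R_N}=D_{R_N}b(x)$ is \emph{not} dominated by $\sup_{R,S\in\mathcal{Q},\,R\subseteq S\subseteq Q}\abs{\angles{b}_R-\angles{b}_S}$. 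Indeed, as stated the inequality fails already for the connected singleton $\mathcal{Q}=\{Q\}$: the left-hand side is $\norm{D_Qb}_{L^p(\mu)}$ but the right-hand side vanishes. The paper's own proof elides the same point: the truncated sum telescopes to $\angles{b}_{P(x,R_*)}-\angles{b}_{R^*}$, but the displayed identity records $\angles{b}_{R_*}-\angles{b}_{R^*}$, silently promoting the child to its parent. The two agree in the limit precisely when the chain through $x$ in $\mathcal{Q}$ is infinite (Lebesgue differentiation sends both to $b(x)$), which is your first suggested fix; when the chain terminates at a cube genuinely minimal in $\mathcal{Q}$, your second fix (an auxiliary $\mathcal{Q}$-descendant) is unavailable, so neither of your sketches closes the gap on its own.

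The uniform repair is to keep the telescoping honest and record the boundary term: for a.e.\ $x\in Q$,
\begin{equation*}
\Big|\sum_{\substack{R\in\mathcal{Q}\\ R\subseteq Q}} D_Rb(x)\Big|\ \leq\ \sup_{\substack{R,S\in\mathcal{Q}\\ R\subseteq S\subseteq Q}}\abs{\angles{b}_R-\angles{b}_S}\,1_S(x)\ +\ \sup_{\substack{R\in\mathcal{Q}\\ R\subseteq Q}}\abs{D_Rb(x)}\,1_R(x),
\end{equation*}
equivalently, enlarge the supremum so that the smaller cube is allowed to be a first-generation dyadic descendant of an element of $\mathcal{Q}$. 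This is exactly what the chain produces, and the added term is harmless downstream: in Lemma \ref{lem:OpNorm:InnerCubes} it vanishes as $L\to\infty$ by Lebesgue differentiation plus dominated convergence, and in Lemma \ref{lem:OpNorm:OuterCubes} the terminal cube of $\mathcal{Q}\cap\mathcal{D}_{\leq L}$ eventually has its $x$-child inside $\mathcal{Q}$ (it is the next member of the increasing chain), so the boundary term is absorbed by the Cauchy estimate $\sup_{m,n\leq L}\abs{\angles{b}_{Q_m}-\angles{b}_{Q_n}}$. In short: right strategy, correctly diagnosed obstacle, but the proposal stops short of the repair.
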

\begin{proof}
To see how the claim follows from Lemma \ref{lem:OpNorm:Carleson}, denote
\begin{align*}
    R_*(x,L,Q)&:=\min \{ R\in \mathcal{Q} \cap \mathcal{D}_{[-L,L]} : x\in  R\subseteq Q\}, \\
R^*(x,L,Q)&:=\max \{ R\in \mathcal{Q} \cap \mathcal{D}_{[-L,L]} : x\in R\subseteq Q\},
\end{align*}
with the minima and maxima of set inclusion.
By connectedness of $\mathcal{Q},$ the martingale differences
$$
D_Qb := \sum_{P\in\operatorname{ch}_\cd (Q)}(\langle b\rangle_P - \langle b\rangle_Q)1_P
$$telescope and hence
\begin{equation*}
\begin{split}
&\sum_{\substack{R\in\mathcal{Q}:\\ R\subseteq Q}} D_Rb(x)=\lim_{L\to \infty}  \left(\angles{b}_{R_*(x,L,Q)}-\angles{b}_{R^*(x,L,Q)}\right).
\end{split}
\end{equation*}
Now simply observe that 
$$
\abs{\lim_{L\to \infty}  \left(\angles{b}_{R_*(x,L,Q)}-\angles{b}_{R^*(x,L,Q)}\right)}\leq 
\sup_{  \substack{ R,S\in \mathcal{Q} \\ R\subseteq S\subseteq Q}} \abs{\angles{b}_R-\angles{b}_S} 1_S.
$$
\end{proof}

\subsubsection*{Proof of Theorem \ref{thm:para:sufficiency} with reductive steps}
\subsubsection*{Step 1} By Lemma \ref{lem:OpNorm:BoundedMeasure} the operator $P_{b,\mathcal{D}}:L^p(\mathbb{R}^d,\mu)\to L^p(\mathbb{R}^d,\mu)$ is compact if the operator $P_{b,\mathcal{Q}_1}:L^p(\mathbb{R}^d,\mu)\to L^p(\mathbb{R}^d,\mu)$ is compact for every $\mathcal{Q}_1\subseteq \mathcal{D}$ with cubes of measure
\begin{itemize}
\item (Bounded measures) $\mu(Q)\sim 1$ for every $Q\in\mathcal{Q}_1$.
\end{itemize}

\subsubsection*{Step 2} By Lemma \ref{lem:OpNorm:FiniteMeasureSpace} the operator $P_{b,\mathcal{Q}_1}:L^p(\mathbb{R}^d,\mu)\to L^p(\mathbb{R}^d,\mu)$ is compact if the operator $P_{b,\mathcal{Q}_2}:L^p(\mathbb{R}^d,\mu)\to L^p(\mathbb{R}^d,\mu)$ is compact for every further subcollection $\mathcal{Q}_2\subseteq \mathcal{Q}_1$ that can be written as $\mathcal{Q}_2 = \mathcal{Q}_2^{\operatorname{in}} \cup \mathcal{Q}_2^{\operatorname{out}},$ where the outer cubes $\mathcal{Q}_2^{\operatorname{out}}$ satisfy:
\begin{itemize}
\item (Coarse scales)  $\mathcal{Q}_2^{\operatorname{out}}\subseteq\mathcal{Q}_1\cap  \mathcal{D}_{\leq L_0}$ for some $L_0\in \bz,$ and 
\item (Chaining condition) $\mathcal{Q}_2^{\operatorname{out}}:=\{Q_1\subseteq Q_2\subseteq \ldots\};$
\end{itemize}
and the inner cubes $\mathcal{Q}_2^{\operatorname{in}}$ satisfy:
\begin{itemize}
\item (Fine scales)  $\mathcal{Q}_2^{\operatorname{in}}\subseteq \mathcal{Q}_1\cap \mathcal{D}_{\geq L_0}$ for some $L_0\in \bz,$ and
\item (Finite support) $\mu(\bigcup_{Q\in \mathcal{Q}_2^{\operatorname{in}}} Q)<+\infty$.
\end{itemize}

\subsubsection*{Step 3} The operator $P_{b,\mathcal{Q}_2^{\operatorname{out}}} :L^p(\mathbb{R}^d,\mu)\to L^p(\mathbb{R}^d,\mu)$ is compact by Lemma \ref{lem:OpNorm:OuterCubes}, and the operator $P_{b,\mathcal{Q}_2^{\operatorname{in}}}:L^p(\mathbb{R}^d,\mu)\to L^p(\mathbb{R}^d,\mu)$ is compact by Lemma \ref{lem:OpNorm:InnerCubes}.

\subsubsection*{The lemmata used in the reductive steps}

\begin{lemma}[Reduction to cubes of bounded measure]\label{lem:OpNorm:BoundedMeasure} Let $\mathcal{Q}$ be a collection of dyadic cubes. Assume the vanishing mean oscillations along heavy and light cubes:
\begin{itemize}
\item (Cubes with large measure) 
$$
\lim_{M\to \infty} \sup_{\substack{Q\in\mathcal{Q}:\\ \mu(Q)\geq M}} \osc=0,
$$
\item (Cubes with small measure) 
$$
\lim_{M\to \infty} \sup_{\substack{Q\in\mathcal{Q}:\\\mu(Q)\leq M^{-1}}} \osc=0.$$
\end{itemize}
Then, the operator $P_{b,\mathcal{Q}}:L^p(\mathbb{R}^d,\mu)\to L^p(\mathbb{R}^d,\mu)$ is compact if the operator $P_{b,\mathcal{R}}:L^p(\mathbb{R}^d,\mu)\to L^p(\mathbb{R}^d,\mu)$ is compact for every subcollection $\mathcal{R}\subseteq \mathcal{Q}$ such that
\begin{itemize}
\item (Bounded measures) $\mu(R)\sim 1$ for every $R\in\mathcal{R}$.
\end{itemize}
\end{lemma}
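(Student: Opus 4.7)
The plan is to apply Lemma \ref{lem:reduce} with a single reduction that isolates the bounded-measure "middle band" of $\mathcal{Q}$ from its heavy and light tails. Given $\varepsilon>0$, I would first invoke the two vanishing assumptions to choose $M=M(\varepsilon)\geq 1$ such that
$$
\sup\Bigl\{\,\|b-\angles{b}_Q\|_{\avL^p(Q)} : Q\in\mathcal{Q},\ \mu(Q)\geq M\ \text{or}\ \mu(Q)\leq M^{-1}\,\Bigr\}<\varepsilon,
$$
and set $\ce(\varepsilon):=\{Q\in\mathcal{Q}:\mu(Q)>M\text{ or }\mu(Q)<M^{-1}\}$. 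The complement $\mathcal{Q}\setminus \ce(\varepsilon)=\{Q\in\mathcal{Q}:M^{-1}\leq\mu(Q)\leq M\}$ is a subcollection of the type covered by the hypothesis (cubes of $\mu$-measure comparable to $1$), so $P_{b,\mathcal{Q}\setminus\ce(\varepsilon)}$ is compact. By Lemma \ref{lem:reduce}, it suffices to show $\norm{P_{b,\ce(\varepsilon)}}_{L^p\to L^p}\lesssim_p\varepsilon$.

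For the operator norm of the tails, I would use Lemma \ref{lem:OpNorm:Carleson} applied to $\ce(\varepsilon)$: it reduces the bound to checking that for every $Q_0\in\ce(\varepsilon)$,
$$
\frac{1}{\mu(Q_0)^{1/p}}\Bigl\|\sum_{\substack{R\in\ce(\varepsilon)\\ R\subseteq Q_0}} D_R b\Bigr\|_{L^p(\mathbb{R}^d,\mu)}\lesssim_p\varepsilon.
$$
The full dyadic martingale sum $\sum_{R\in\cd,\, R\subseteq Q_0} D_R b$ telescopes (by the martingale convergence theorem applied to $b\in\bmo^p\subseteq L^1_{\mathrm{loc}}$) to $(b-\angles{b}_{Q_0})1_{Q_0}$, whose normalized $L^p$-norm is precisely $\|b-\angles{b}_{Q_0}\|_{\avL^p(Q_0)}<\varepsilon$ by the choice of $M$. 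The restricted sum over $R\in\ce(\varepsilon)$ is then controlled by the full sum up to a constant $C_p$ via the $L^p$-unconditionality of dyadic martingale differences (Burkholder's square-function inequality on $(\mathbb{R}^d,\mu)$), giving the displayed estimate and hence $\norm{P_{b,\ce(\varepsilon)}}_{L^p\to L^p}\lesssim_p\varepsilon$.

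The main obstacle I anticipate is the passage from the full martingale sum to the restricted sum over $R\in\ce(\varepsilon)\cap\{R\subseteq Q_0\}$: this requires that selecting a subcollection of dyadic martingale differences is uniformly bounded on $L^p(\mathbb{R}^d,\mu)$. This is exactly the unconditionality of the dyadic Haar-type system, a standard consequence of Burkholder's inequality for $p\in(1,\infty)$, which is available on any $\sigma$-finite measure space and so applies to the locally finite Borel measure $\mu$ at hand.
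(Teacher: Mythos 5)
Your proof follows the same decomposition as the paper: split $\mathcal{Q}$ into the bounded-measure band $\mathcal{Q}^{\operatorname{avg}}(M):=\{Q\in\mathcal{Q}: M^{-1}\leq\mu(Q)\leq M\}$ (compact by hypothesis) and its complement (operator norm made small by choosing $M$ large), then conclude via Lemma \ref{lem:reduce}. The paper leaves the tail operator-norm bound implicit, and your justification of it via Lemma \ref{lem:OpNorm:Carleson} combined with the $L^p$-unconditionality of dyadic martingale differences is a correct way to fill in that detail.
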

\begin{proof}[Proof of Lemma \ref{lem:OpNorm:BoundedMeasure}] Denote 
$$
\mathcal{Q}^{\operatorname{avg}}(M): = \Big\{ Q \in \mathcal{Q}: M^{-1} \leq \mu(Q) \leq M \Big\}
$$
and split 
$$
P_{b,\mathcal{Q}} = P_{b,\mathcal{Q}^{\operatorname{avg}}(M)}  + P_{b,\mathcal{Q}\setminus \mathcal{Q}^{\operatorname{avg}}(M)}.
$$
Directly by assumption $P_{b,\mathcal{Q}^{\operatorname{avg}}(M)}$ is compact and by the vanishing oscillations along the light and the heavy cubes, the operator norm of $P_{b,\mathcal{Q}\setminus \mathcal{Q}^{\operatorname{avg}}(M)}$ can be made arbitrarily small by choosing $M$ sufficiently large.  
\end{proof}

\begin{lemma}[Reduction to finite measure space]\label{lem:OpNorm:FiniteMeasureSpace} Assume that $b$ is a locally integrable function that has vanishing mean oscillations along distant cubes,
$$\lim_{M\to \infty} \sup_{\substack{Q\in\mathcal{D}:\\\dist(0,Q) \geq M}} \osc=0.$$
Let $\mathcal{Q}$ be a connected collection of dyadic cubes.
Then, the operator $P_{b,\mathcal{Q}}:L^p(\mathbb{R}^d,\mu)\to L^p(\mathbb{R}^d,\mu)$ is compact if the operator $P_{b,\mathcal{R}}:L^p(\mathbb{R}^d,\mu)\to L^p(\mathbb{R}^d,\mu)$ is compact for every  subcollection $\mathcal{R}\subseteq \mathcal{Q}$ that can be partitioned as
$
\mathcal{R} = \mathcal{R}^{\operatorname{in}}\cup\mathcal{R}^{\operatorname{out}}
$
with
\begin{itemize}
\item (Coarse scales) $\mathcal{R}^{\operatorname{out}}\subseteq\mathcal{Q}\cap  \mathcal{D}_{\leq L_0},$ for some $L_0\in \bz,$ and
\item (Chain) $\mathcal{R}^{\operatorname{out}}:=\{Q_1\subseteq Q_2\subseteq \ldots\};$
\item (Fine scales)  $\mathcal{R}^{\operatorname{in}}\subseteq \mathcal{Q}\cap \mathcal{D}_{\geq L_0},$ for some $L_0\in \bz,$ and
\item (Finitely supported) $\mu(\cup_{Q\in\mathcal{R}^{\operatorname{in}}} Q)<+\infty.$
\item (Connectedness) The collection $\mathcal{R}^{\operatorname{in}}$ and the collection $\mathcal{R}^{\operatorname{out}}$ are both connected.
\end{itemize}
\end{lemma}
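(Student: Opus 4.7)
The plan is to apply Lemma~\ref{lem:reduce}: for each $\varepsilon>0$, I will single out a subcollection $\ce(\varepsilon)\subseteq\mathcal{Q}$ so that $\|P_{b,\ce(\varepsilon)}\|_{L^p(\mathbb{R}^d,\mu)\to L^p(\mathbb{R}^d,\mu)}\leq \varepsilon$ while the complement $\mathcal{Q}\setminus\ce(\varepsilon)$ admits a partition of the form $\mathcal{R}^{\operatorname{in}}\cup\mathcal{R}^{\operatorname{out}}$ satisfying the four bulleted requirements (so that $P_{b,\mathcal{Q}\setminus\ce(\varepsilon)}$ is compact by the standing hypothesis of the lemma).

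The partition will be built around a single large dyadic cube $R$. Using the vanishing mean oscillation along distant cubes, I first pick $M$ so large that $\|b-\angles{b}_Q\|_{\avL^p(Q)}\leq c\,\varepsilon$ for every $Q\in\mathcal{D}$ with $\dist(0,Q)\geq M$, where $c$ is a $p$-dependent constant to be fixed at the very end. Then I choose a dyadic cube $R$ at a coarse enough scale $L_0\in\bz$ with $R\supseteq B(0,M)$ and set
\[
\ce(\varepsilon):=\{Q\in\mathcal{Q}:Q\cap R=\emptyset\},\quad \mathcal{R}^{\operatorname{out}}:=\{Q\in\mathcal{Q}:Q\supseteq R\},\quad \mathcal{R}^{\operatorname{in}}:=\{Q\in\mathcal{Q}:Q\subsetneq R\}.
\]
By dyadic nesting every $Q\in\mathcal{Q}$ belongs to exactly one of these three classes. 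The cubes in $\mathcal{R}^{\operatorname{out}}$ are dyadic ancestors of $R$, hence form a chain contained in $\mathcal{D}_{\leq L_0}$; the cubes in $\mathcal{R}^{\operatorname{in}}$ lie in $\mathcal{D}_{\geq L_0}$ and satisfy $\mu(\bigcup_{Q\in\mathcal{R}^{\operatorname{in}}}Q)\leq \mu(R)<\infty$ by local finiteness of $\mu$; and connectedness of each of $\mathcal{R}^{\operatorname{out}}$ and $\mathcal{R}^{\operatorname{in}}$ follows from the connectedness of $\mathcal{Q}$ since the relations ``$\supseteq R$'' and ``$\subsetneq R$'' are each preserved under dyadic ancestors sandwiched between two members. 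Finally every $Q\in\ce(\varepsilon)$ is disjoint from $R\supseteq B(0,M)$, forcing $\dist(0,Q)\geq M$.

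The operator-norm estimate on $\ce(\varepsilon)$ will combine Lemma~\ref{lem:OpNorm:Carleson} with Lemma~\ref{lem:OpNorm:connected}. For each testing cube $Q\in\ce(\varepsilon)$, every dyadic descendant $R'\in\mathcal{Q}$ with $R'\subseteq Q$ is again disjoint from $R$, so $\{R'\in\ce(\varepsilon):R'\subseteq Q\}=\{R'\in\mathcal{Q}:R'\subseteq Q\}$, a connected subfamily. Lemma~\ref{lem:OpNorm:connected} then dominates the $L^p(\mu)$-norm of $\sum_{R'\in\ce(\varepsilon),\,R'\subseteq Q}D_{R'}b$ by the $L^p(\mu)$-norm of $\sup_{R'\subseteq S\subseteq Q}|\angles{b}_{R'}-\angles{b}_S|\,1_S$, which, via the splitting $\angles{b}_{R'}-\angles{b}_S=(\angles{b}_{R'}-\angles{b}_Q)-(\angles{b}_S-\angles{b}_Q)$, is controlled pointwise by twice the dyadic maximal function of $(b-\angles{b}_Q)\,1_Q$. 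The $L^p(\mu)$-boundedness of the dyadic maximal function then yields the estimate $\lesssim \mu(Q)^{1/p}\|b-\angles{b}_Q\|_{\avL^p(Q)}\leq \mu(Q)^{1/p}c\,\varepsilon$, so dividing by $\mu(Q)^{1/p}$ and taking the supremum over $Q\in\ce(\varepsilon)$ yields $\|P_{b,\ce(\varepsilon)}\|\leq C_p c\,\varepsilon$, which is $\leq\varepsilon$ for a suitable choice of $c$.

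The hard part is arranging the clean three-way split simultaneously: the discarded cubes must all be distant (so that their contribution is absorbed by the vanishing mean oscillation), while the kept cubes must decompose into a coarse chain and a finitely supported fine family, each of them connected. It is precisely the dyadic nesting of $R$ relative to every other dyadic cube that makes such a partition automatic, and the stability of ``$\supseteq R$'' and ``$\subsetneq R$'' under intermediate ancestors is what transfers connectedness from $\mathcal{Q}$ to its two kept pieces.
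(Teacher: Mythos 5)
Your overall plan—split off a distant family, keep a chain of outer cubes and a finitely supported family of inner cubes, and invoke Lemma~\ref{lem:reduce}—is the right shape, but it hinges on a construction that generally fails: you ask for a single dyadic cube $R\in\mathcal{D}$ with $R\supseteq B(0,M)$. In a standard dyadic system on $\mathbb{R}^d$ the origin sits on the boundary of dyadic cubes at every scale, so no dyadic cube contains any ball $B(0,M)$ with $M>0$. Consequently your three-way split around $R$ does not cover the picture correctly: cubes that lie near the origin but in the ``other quadrants'' relative to $R$ are disjoint from $R$, so you throw them into $\ce(\varepsilon)$, yet they are \emph{not} distant (their distance to the origin can be arbitrarily small). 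The vanishing-mean-oscillation-along-distant-cubes hypothesis gives no smallness for those cubes, so the estimate $\norm{P_{b,\ce(\varepsilon)}}\leq\varepsilon$ breaks down.

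This is precisely the difficulty the paper's proof is designed to handle. Instead of one cube $R$, the paper takes a \emph{finite} auxiliary collection $\ca_M\subseteq\mathcal{D}$ whose union $\Omega_M$ contains $B(0,M)$, and partitions $\mathcal{Q}$ into a distant part ($Q\subseteq\Omega_M^c$), an inner part ($Q$ meeting $\Omega_M$ and contained in some $A\in\ca_M$), and an outer part ($Q$ meeting $\Omega_M$ and strictly containing every $A\in\ca_M$ it meets). The outer part is then not a single chain but a \emph{finite} union of chains, one per minimal outer cube, and finiteness follows because each minimal outer cube strictly contains some element of the finite set $\ca_M$. The lemma's hypothesis (which only addresses a single chain) is then applied to each of the finitely many chains and to the inner family separately; a finite sum of compact operators is compact. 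To repair your argument, replace the single $R$ by such a finite collection $\ca_M$ and keep track of the finitely many resulting chains; with that change the rest of your reasoning (the connectedness of the pieces, the finite measure of the inner support, and the $\dist(0,Q)\geq M$ property of the discarded cubes) goes through essentially as you wrote it.
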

We use measure-theoretic arguments instead of metric arguments, so that the argument works in the measure-theoretic setting as well.
\begin{proof}[Proof of Lemma \ref{lem:OpNorm:FiniteMeasureSpace}] For each $M\in\bn$ let $\ca_M\subseteq \mathcal{D}$ be an auxiliary finite collection of cubes. Set $\Omega_M:=\bigcup_{A\in\ca_M} A$. Choose the auxiliary collections $\ca_M$ so that $\Omega_M\uparrow \br^d$ and, for compatibility with the metric definition for vanishing mean oscillation along distant cubes, so that $B(0,M)\subseteq \Omega_M$. For each $A\in\ca_M$ there is $k(A,M)\in \bz$ such that $A\in \mathcal{D}_{k(A,M)}.$ Define the finest and coarsest auxiliary scales
$$
L^*(M):=\max_{A\in\ca_M}k(A,M),\qquad L_*(M):=\min_{A\in\ca_M}k(A,M).
$$
Now, we partition 
$$
\mathcal{Q} =  \mathcal{Q}^{\mathrm{dist}}(M) \sqcup \mathcal{Q}^{\mathrm{in}}(M) \sqcup \mathcal{Q}^{\mathrm{out}}(M), 
$$ 
where the three collections are
\begin{align*}
    \mathcal{Q}^{\mathrm{dist}}(M) &:=\Big\{Q\in \mathcal{Q} : Q\subseteq \Omega_M^c\Big\}, \\
\mathcal{Q}^{\mathrm{in}}(M) &:=\Big\{Q\in \mathcal{Q} : \text{$Q\cap \Omega_M\neq \emptyset$ and $Q\subseteq A$ for some $A\in \ca_M$ with $Q\cap A\neq \emptyset$}\Big\}, \\
\mathcal{Q}^{\mathrm{out}}(M) &:=\Big\{Q\in \mathcal{Q} : \text{$Q\cap \Omega_M\neq \emptyset$ and $Q\supsetneq A$ for all $A\in \ca_M$ with $Q\cap A\neq \emptyset$}\Big\}.
\end{align*}
Notice that 
$$
\mathcal{Q}^{\operatorname{in}}(M)\subset \mathcal{D}_{\geq L^*(M)},\qquad \mathcal{Q}^{\operatorname{out}}(M)\subset \mathcal{D}_{\leq L_*(M)}.
$$
First, we consider the outer cubes. Let $\mathcal{Q}^{\mathrm{out}}_*(M)$ be the collection of all the minimal (with respect to the set containment) cubes $Q_*$ in $\mathcal{Q}^{\mathrm{out}}(M)$. By nestedness, the cubes $Q_*$ are disjoint. Moreover, for each $Q_*$ there is $A\in\ca_M$ so that $A\subsetneq Q_*$. The collection $\ca_M$ is finite. Thus, the collection $\mathcal{Q}^{\mathrm{out}}_*(M)$ is finite. Now, we decompose
$$
\mathcal{Q}^{\mathrm{out}}(M)=\bigsqcup_{Q_* \in \mathcal{Q}^{\mathrm{out}}_*(M)} \{Q\in \mathcal{Q}^{\mathrm{out}}(M) : Q\supseteq Q_*\}=:\bigsqcup_{Q_* \in \mathcal{Q}^{\mathrm{out}}_*(M)} \mathcal{Q}^{\mathrm{out}}_{Q_*}(M).
$$
Now, the collection $\mathcal{Q}^{\mathrm{out}}_{Q_*}(M)$ has all the claimed properties:
\begin{itemize}
\item (Coarse scales) We have $\mathcal{Q}^{\mathrm{out}}_{Q_*}(M)\subseteq \mathcal{Q}^{\mathrm{out}}(M)\subseteq  \mathcal{D}_{\leq L^*(M)}$.
\item (Chain order)  The collection $\mathcal{Q}^{\mathrm{out}}_{Q_*}(M)$ can be exhausted by an increasing sequence $Q_0:=Q_*\subseteq Q_1\subseteq \ldots$.
\item (Connectedness) If $\mathcal{Q}$ is connected, then so is $\mathcal{Q}^{\mathrm{out}}_{Q_*}(M)$. 
\end{itemize}
Also the collection $\mathcal{Q}^{\mathrm{in}}(M)$ has all the desired properties:
\begin{itemize}
\item (Fine scales) We have $\mathcal{Q}^{\mathrm{in}}(M)\subseteq  \mathcal{D}_{\geq L_*(M)}$.
\item (Supported on a set of finite measure)  We have
$$
\mu\Big(\bigcup_{Q\in \mathcal{Q}^{\mathrm{in}}(M)}Q\Big)\leq \mu\Big( \bigcup_{A\in\mathcal{A}_M} A \Big)\leq \mu(\Omega_M) < +\infty.
$$
\item (Connectedness) If $\mathcal{Q}$ is connected, then so is $\mathcal{Q}^{\mathrm{in}}(M)$. 
\end{itemize}
Finally, we observe that
$$\mathcal{Q}^{\mathrm{dist}}(M)\subseteq \{Q\in \mathcal{D}: \dist(0,Q)\geq M\}.$$
The corresponding decomposition for the paraproduct is
$$
P_\mathcal{Q}:=\sum_{Q\in\mathcal{Q}} P_Q:=\sum_{\mathcal{Q}^{\mathrm{in}}(M)}P_Q+\sum_{Q_* \in\mathcal{Q}^{\mathrm{out}}_*(M)} \Big( \sum_{Q\in \mathcal{Q}^{\mathrm{out}}_{Q_*}(M)}P_Q\Big)+\sum_{Q\in \mathcal{Q}^{\mathrm{dist}}(M)}P_Q.
$$
The operator norm of the term $\sum_{Q\in \mathcal{Q}^{\mathrm{dist}}(M)}P_Q$ tends to zero as $M\to \infty$ by the assumption on vanishing mean oscillation along distant cubes. Therefore, the operator $P_{\mathcal{Q}}$ is compact if the operators $P_{\mathcal{Q}^{\mathrm{in}}(M)}$ and $P_{\mathcal{Q}^{\mathrm{out}}_{Q_*}(M)}$
are both compact.
\end{proof}

\begin{lemma}[Outer cubes of bounded measure]\label{lem:OpNorm:OuterCubes} Let $\mu$ be a locally finite Borel measure on $\br^d$, and let $\mathcal{Q}$ be a \emph{connected} collection of dyadic cubes. Assume that $b\in\bmo^p(\mathbb{R}^d,\mathcal{Q},\mu)$. Assume that $\mathcal{Q}$ is such that (1) $\mathcal{Q}\subseteq \mathcal{D}_{\leq L_0}$ for some $L_0\in \bz$, (2)  $\mathcal{Q}:=\{Q_1\subseteq Q_2\subseteq \ldots\}$ is an increasing sequence of cubes, (3) $\mu(Q)\sim 1$ for all $Q\in \mathcal{Q}$. 
Then, the operator $P_{b,\mathcal{Q}}$ is compact. 
\end{lemma}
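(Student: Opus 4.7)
The plan is to express $P_{b,\mathcal{Q}}$ as the operator-norm limit of the finite-rank partial-sum operators
\begin{equation*}
P^{(N)}f := \sum_{n\leq N} \langle f\rangle_{Q_n} D_{Q_n} b.
\end{equation*}
Each $P^{(N)}$ is a finite sum of rank-one operators: the functional $f\mapsto\langle f\rangle_{Q_n}$ is bounded on $L^p(\br^d,\mu)$ because $\mu(Q_n)\sim 1$, and $D_{Q_n}b\in L^p(\br^d,\mu)$ by the BMO hypothesis. Each $P^{(N)}$ is therefore compact, and since the compact operators form a norm-closed subspace it suffices to show that the tail norm $\|P_{b,\{Q_n:n>N\}}\|_{L^p\to L^p}$ tends to zero as $N\to\infty$ (the case of a finite chain being trivial).

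To control the tail, I would combine Lemma \ref{lem:OpNorm:Carleson} with Lemma \ref{lem:OpNorm:connected}. Since $\mathcal{Q}$ is connected and totally ordered by inclusion, so is every sub-chain $\{Q_n:N<n\leq m\}$, and Lemma \ref{lem:OpNorm:connected} applied with outer cube $Q_m$ yields
\begin{equation*}
\Big\|\sum_{N<n\leq m}D_{Q_n}b\Big\|_{L^p(\br^d,\mu)} \lesssim \Big\|\sup_{N<n\leq k\leq m}|\langle b\rangle_{Q_n}-\langle b\rangle_{Q_k}|\,1_{Q_k}\Big\|_{L^p(\br^d,\mu)} \leq C_N\,\mu(Q_m)^{1/p},
\end{equation*}
where $C_N := \sup_{n,k>N}|\langle b\rangle_{Q_n}-\langle b\rangle_{Q_k}|$. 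Combined with $\mu(Q_m)\sim 1$ and Lemma \ref{lem:OpNorm:Carleson}, this gives $\|P_{b,\{Q_n:n>N\}}\|_{L^p\to L^p}\lesssim C_N$, so the proof reduces to showing that the scalar sequence $(\langle b\rangle_{Q_n})_n$ is Cauchy.

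For the Cauchy property I would exploit that $(\mu(Q_n))_n$ is a bounded increasing sequence and hence converges, which forces $\mu(Q_m\setminus Q_n)=\mu(Q_m)-\mu(Q_n)\to 0$ uniformly in $n,m>N$ as $N\to\infty$. Using the identity $\int_{Q_m}(b-\langle b\rangle_{Q_m})\,d\mu = 0$ and splitting $Q_m=Q_n\cup(Q_m\setminus Q_n)$ for $n<m$ gives
\begin{equation*}
\langle b\rangle_{Q_m}-\langle b\rangle_{Q_n}=\frac{1}{\mu(Q_n)}\int_{Q_m\setminus Q_n}(b-\langle b\rangle_{Q_m})\,d\mu,
\end{equation*}
and Hölder's inequality together with the BMO bound $\|b-\langle b\rangle_{Q_m}\|_{L^p(Q_m)}\leq \|b\|_{\bmo^p}\mu(Q_m)^{1/p}$ controls this by $\mu(Q_m\setminus Q_n)^{1-1/p}\|b\|_{\bmo^p}$, which is uniformly small for $n,m>N$. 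I do not anticipate any real obstacle; the only genuine content is the Cauchy estimate for the averages, a short BMO-plus-Hölder calculation riding on the measure-theoretic fact that the nested chain has essentially stabilized in $\mu$-measure.
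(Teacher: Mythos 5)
Your proof is correct and follows the same strategy as the paper: truncate to a finite (hence finite-rank) initial segment of the chain, and control the tail by combining Lemma \ref{lem:OpNorm:Carleson} with Lemma \ref{lem:OpNorm:connected} to reduce to showing that the averages $\langle b\rangle_{Q_n}$ are Cauchy. The only genuine difference is in that last step: you get the Cauchy property directly from the identity $\langle b\rangle_{Q_m}-\langle b\rangle_{Q_n}=\mu(Q_n)^{-1}\int_{Q_m\setminus Q_n}(b-\langle b\rangle_{Q_m})\,d\mu$ together with H\"older and $\mu(Q_m\setminus Q_n)\to 0$, whereas the paper first establishes $\int_{\bigcup_k Q_k}|b|\,d\mu<\infty$ and then appeals to continuity of measure; your version is a bit more self-contained and avoids that intermediate integrability step, but it is the same estimate in substance.
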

\begin{proof}[Proof of Lemma \ref{lem:OpNorm:OuterCubes}]Let $L\leq L_0$ and by the condition $(1)$ decompose
$$
\sum_{Q\in\mathcal{Q}}P_Q=\sum_{Q\in \mathcal{Q}\cap \mathcal{D}_{[L,L_0]}}P_Q+\sum_{Q\in \mathcal{Q}\cap \mathcal{D}_{\leq L}}P_Q.
$$
By the condition $(2),$ the operator $\sum_{Q\in \mathcal{Q}\cap \mathcal{D}_{[L,L_0]}}P_Q$ is of finite rank (and hence compact) for every $L.$
To control the error, observe first that by $(3):$
$$
\int_{\bigcup_{k=1}^\infty Q_k} \abs{b}\dmu\lesssim \int_{Q_1} \abs{b}+\norm{b}_{\bmo^p(\mathbb{R}^d,\mathcal{Q},\mu)}<+\infty.
$$
This together with $\mu(Q_k)\lesssim 1,$ for $k=1,\dots,$ (by $(3)$) implies that
$$
\lim_{L\to -\infty} \sup_{m,n\leq L} \abs{\angles{b}_{Q_m}-\angles{b}_{Q_n}}=0.
$$
Thus, by the connectedness of $\mathcal{Q}$ and Lemma \ref{lem:OpNorm:connected}:
$$
\lim_{L\to-\infty} \norm{\sum_{Q\in \mathcal{Q}\cap \mathcal{D}_{\leq L}}P_Q}_{L^p(\mathbb{R}^d,\mu)\to L^p(\mathbb{R}^d,\mu)}\lesssim  \lim_{L\to -\infty} \sup_{m,n\leq L} \abs{\angles{b}_{Q_m}-\angles{b}_{Q_n}}=0.
$$
\end{proof}
\begin{lemma}[Inner cubes of bounded measure]\label{lem:OpNorm:InnerCubes}  Let $\mu$ be a locally finite Borel measure on $\br^d$, and let $\mathcal{Q}$ be a \emph{connected} collection of dyadic cubes. Assume that $b\in\bmo^p(\mathbb{R}^d,\mathcal{Q},\mu)$. Assume that $\mathcal{Q}$ is such that (1) $\mathcal{Q}\subseteq \mathcal{D}_{\geq L_0}$ for some $L_0\in \bz$, (2) $\mu(\bigcup_{Q\in\mathcal{Q}} Q)<+\infty$, (3) $\mu(Q)\sim 1$ for all $Q\in \mathcal{Q}$. 
Then, the operator $P_\mathcal{Q}$ is compact.
\end{lemma}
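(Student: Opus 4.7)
The plan is to approximate $P_{b,\mathcal{Q}}$ in the operator norm by finite-rank operators obtained by truncating the scales from above, and then show the remainder vanishes via dominated convergence of averages. For each integer $K \geq L_0$, set $\mathcal{Q}_K := \mathcal{Q} \cap \mathcal{D}_{[L_0, K]}$ and $\mathcal{Q}^K := \mathcal{Q} \cap \mathcal{D}_{\geq K+1}$, so that $P_{b,\mathcal{Q}} = P_{b,\mathcal{Q}_K} + P_{b,\mathcal{Q}^K}$. The operator $P_{b,\mathcal{Q}_K}$ is of finite rank: for each of the finitely many scales $k \in [L_0,K]$, the cubes in $\mathcal{Q} \cap \mathcal{D}_k$ are pairwise disjoint, of $\mu$-measure $\sim 1$, and all contained in the set $\bigcup\mathcal{Q}$ of finite $\mu$-measure, so there are only finitely many at each scale; moreover each $P_Q$ is of rank at most one.

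It remains to prove that $\|P_{b,\mathcal{Q}^K}\|_{L^p(\mathbb{R}^d,\mu) \to L^p(\mathbb{R}^d,\mu)} \to 0$ as $K \to \infty$. I would apply Lemma \ref{lem:OpNorm:Carleson} to $\mathcal{Q}^K$: for each $Q \in \mathcal{Q}^K$, the sub-subcollection $\{R \in \mathcal{Q}^K : R \subseteq Q\}$ coincides with $\{R \in \mathcal{Q} : R \subseteq Q\}$ (any dyadic $R \subseteq Q \in \mathcal{D}_{\geq K+1}$ is itself in $\mathcal{D}_{\geq K+1}$) and it is connected by the connectedness of $\mathcal{Q}$. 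Lemma \ref{lem:OpNorm:connected} will then bound the inner norm by $\|T_K 1_Q\|_{L^p(\mu)}$, where
$$
T_K(x) := \sup_{\substack{R,S \in \mathcal{Q}^K \\ R \subseteq S,\; x \in S}} \big|\langle b \rangle_R - \langle b \rangle_S\big|.
$$
Since $\mu(Q) \sim 1$ makes $\mu(Q)^{-1/p}$ uniformly bounded, this yields $\|P_{b,\mathcal{Q}^K}\|_{L^p \to L^p} \lesssim \|T_K\|_{L^p(\mu)}$, reducing the task to verifying that $\|T_K\|_{L^p(\mu)} \to 0$.

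For this I would invoke dominated convergence. Fix $x \in \bigcup \mathcal{Q}$; by connectedness the set $\{R \in \mathcal{Q} : x \in R\}$ is totally ordered by inclusion. If it is finite, then $T_K(x) = 0$ for all $K$ past the scale of its minimal member. If it is infinite, its nested members $R^{(0)}(x) \supseteq R^{(1)}(x) \supseteq \cdots$ satisfy $\mu(R^{(n)}) \sim 1$; continuity of $\mu$ from above gives $\mu\big(\bigcap_n R^{(n)}(x)\big) \geq c > 0$, and dominated convergence applied to the integrands $b \cdot 1_{R^{(n)}(x)}$ (integrable by local integrability of $b$) yields that $\langle b \rangle_{R^{(n)}(x)}$ converges, so $T_K(x) \to 0$. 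For the dominating function, Jensen's inequality and the $\bmo^p(\mathcal{Q})$-hypothesis give, for any $R \subseteq S$ in $\mathcal{Q}$,
$$
\big|\langle b\rangle_R - \langle b\rangle_S\big| \leq \Big(\tfrac{\mu(S)}{\mu(R)}\Big)^{1/p} \Big(\tfrac{1}{\mu(S)} \int_S |b - \langle b\rangle_S|^p \, d\mu\Big)^{1/p} \lesssim \|b\|_{\bmo^p(\mathbb{R}^d,\mathcal{Q},\mu)},
$$
so $T_K \leq C\|b\|_{\bmo^p}\,1_{\bigcup \mathcal{Q}} \in L^p(\mu)$, using $\mu(\bigcup\mathcal{Q}) < +\infty$. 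Hence $\|T_K\|_{L^p(\mu)} \to 0$ and $P_{b,\mathcal{Q}}$ is a norm-limit of compact operators, hence compact.

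The main obstacle is the pointwise convergence $T_K(x) \to 0$: the key observation is that the hypothesis $\mu(Q) \sim 1$ prevents the chain of cubes through $x$ from shrinking to a $\mu$-null set, which lets a cheap version of the martingale convergence theorem (dominated convergence for averages over a decreasing set of positive limiting measure) do the work.
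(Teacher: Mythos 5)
Your proposal follows essentially the same route as the paper: decompose $P_{b,\mathcal{Q}}$ into a finite-rank part over scales $[L_0,K]$ (finiteness at each scale coming from $\mu(Q)\sim1$, disjointness, and $\mu(\bigcup\mathcal{Q})<\infty$) plus a tail $P_{b,\mathcal{Q}^K}$, bound the tail's operator norm via Lemma~\ref{lem:OpNorm:Carleson} and Lemma~\ref{lem:OpNorm:connected} (using $\mu(Q)\sim1$ to drop the weight $\mu(Q)^{-1/p}$), then let $K\to\infty$ using dominated convergence with dominant $\lesssim\|b\|_{\bmo^p}1_{\bigcup\mathcal{Q}}\in L^p(\mu)$. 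The paper compresses the pointwise limit into an invocation of ``the Lebesgue differentiation theorem,'' whereas you spell out the real mechanism: because $\mu(Q)\sim1$, the chain of cubes through $x$ shrinks to a set $Q_\infty(x)$ of measure $\geq c>0$, so the averages $\langle b\rangle_{R^{(n)}(x)}$ converge by plain dominated convergence. This is the correct reason, as these sets do not shrink to null sets and the classical differentiation theorem is not literally what is needed.

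One point you should close: your $T_K(x)$ is a supremum over pairs $R\subseteq S$ with only $x\in S$ required (this matches the right-hand side of Lemma~\ref{lem:OpNorm:connected}, where $1_S$ appears), but your pointwise argument only treats pairs along the chain through $x$. To dismiss pairs with $x\notin R$: if $R\in\mathcal{Q}^K$, $R\subsetneq S$, $x\in S$, $x\notin R$, then for any $Q_m$ in the chain through $x$ at a scale finer than $R$ one has $Q_m\cap R=\emptyset$ (since $x\in Q_m\setminus R$ forbids $Q_m\subseteq R$), so $R\cap Q_\infty(x)=\emptyset$ and hence $R\subseteq S\setminus Q_\infty(x)$, giving $\mu(R)\leq\mu(S)-\mu(Q_\infty(x))$. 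As $K\to\infty$ any admissible $S$ has $\mu(S)\to\mu(Q_\infty(x))$, so for $K$ large this is $<c$, contradicting $\mu(R)\geq c$; thus such pairs are eventually absent from the supremum. (The paper's own proof leaves this same point implicit under the ``Lebesgue differentiation'' shorthand, so this is a fair completion rather than a departure from the intended argument.)
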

\begin{proof}[Proof of Lemma \ref{lem:OpNorm:InnerCubes}] Let $L\geq L_0$ and by the assumption $(1)$ decompose
$$
\sum_{Q\in\mathcal{Q}}P_Q=\sum_{Q\in \mathcal{Q}\cap \mathcal{D}_{[L_0,L]}}P_Q+\sum_{Q\in \mathcal{Q}\cap \mathcal{D}_{\geq L}}P_Q.
$$
Since each $\mathcal{D}_K$, with $K\in\bz$, is a collection of pairwise disjoint sets, it follows from the assumptions $(2)$ and $(3)$ that each collection $\mathcal{Q}\cap\mathcal{D}_{K}$ is finite and hence the collection
$
\mathcal{Q}\cap \mathcal{D}_{[L_0,L]}=\bigcup_{K=L_0}^L \mathcal{Q}\cap \mathcal{D}_{K}$ is finite for every $L\geq L_0$.
Therefore, the operator $\sum_{Q\in \mathcal{Q}\cap \mathcal{D}_{[L_0,L]}}P_Q$ is of finite rank for every $L\geq L_0$. To control the error term $\sum_{Q\in \mathcal{Q}\cap \mathcal{D}_{\geq L}}P_Q$, we estimate as follows:
\begin{align*}
    &\lim_{L\to\infty} \norm{\sum_{Q\in \mathcal{Q}\cap \mathcal{D}_{\geq L}}P_Q}_{L^p(\mathbb{R}^d,\mu)\to L^p(\mathbb{R}^d,\mu)}
\lesssim \lim_{L\to\infty} \norm{ \sup_{\substack{ R,S\in\mathcal{Q}\cap\mathcal{D}_{\geq L} \\ R\subset S }}\abs{\angles{b}_R-\angles{b}_S}1_S}_{L^p(\mathbb{R}^d,\mu)}\\
&= \norm{\lim_{L\to\infty} \sup_{\substack{ R,S\in\mathcal{Q}\cap\mathcal{D}_{\geq L} \\ R\subset S }}\abs{\angles{b}_R-\angles{b}_S}1_S}_{L^p(\mathbb{R}^d,\mu)} =\norm{0}_{L^p(\mathbb{R}^d,\mu)}.
\end{align*}
Above we used Lemmas \ref{lem:OpNorm:Carleson} and \ref{lem:OpNorm:connected} together with $\mu(Q)\sim 1$ for all $Q\in\mathcal{Q}$ in the first bound;
then, the dominated convergence theorem to interchange the order of limit and integration; and finally, the Lebesgue differentiation theorem to calculate the pointwise limit.
\end{proof}

\subsection{John--Nirenberg inequality for $\operatorname{VMO}(\Omega,\cd,\mu)$ with doubling measures}
We begin by setting up the following notation for the $L^\infty(Q)$ norm modulo the worst exceptional set $E$ of measure $\mu(E)\leq\gamma\mu(Q):$
\begin{equation*}
\begin{split}
\norm{f}_{L^\infty_\gamma(Q)} &:=(f1_Q)^*(\gamma \mu(Q)) \\&:=\inf \big\{B>0 : \mu(Q\cap \{|f|>B\})\leq \gamma \mu(Q)\big\} \\ 
&=\inf_{E : \mu(E\cap Q) \leq \gamma\mu(E)}\norm{f}_{L^\infty(Q\setminus E)}.
\end{split}
\end{equation*}
In particular, 
$$
\norm{f}_{L^\infty(Q)}=\norm{f}_{L^\infty_0(Q)}\geq \norm{f}_{L^\infty_{\gamma_1}(Q)}\geq \norm{f}_{L^\infty_{\gamma_2}(Q)}\geq \norm{f}_{L^\infty_1(Q)}=0,
$$ 
for $0\leq \gamma_1\leq \gamma_2\leq 1$. Moreover, by Chebyshev's inequality
\begin{align}\label{eq:Chebaby}
    \norm{f}_{L^\infty_\gamma(Q)}\leq \phi^{-1}\left(\frac{1}{\gamma} \frac{1}{\mu(Q)}\int_Q \phi(f)\dmu\right),
\end{align} 
for every invertible increasing function $\phi:[0,+\infty)\to[0,+\infty)$ with $\phi(0)=0$. Similarly,
\begin{equation}\label{eq:weakl1}
 \norm{f}_{L^\infty_\gamma(Q)}\leq \frac{1}{\gamma}\norm{f}_{\textit{\L}^{1,\infty}(Q)}.
\end{equation}

 The exponential integrability of functions of bounded mean oscillation was established by John and Nirenberg \cite{johnnirenberg1961}. This together with the works by John \cite{john1965} and Str\"omberg \cite{stromberg1979} yields the following John--Nirenberg-type inequality:
\begin{equation}\label{eq:john_nirenberg_stromberg}
\frac{\big|Q\cap \{\abs{b-\angles{b}_Q}>\lambda\}\big|}{|Q|}\lesssim \exp(-c (\sup_{Q} \norm{b-\angles{b}_Q}_{L^\infty_\gamma(Q)})^{-1} \lambda)
\end{equation}
for $\gamma\in(0,1/2]$.
These and related inequalities in varied contexts are all known as John--Nirenberg-type inequalities. The area of such inequalities is vast; for illustration, the original paper by John and Nirenberg \cite{johnnirenberg1961} is cited by hundreds of papers. For further references, see for example Grafako's textbooks \cite{GrafakosCFA,GraMFA}.

We move on to state a dyadic version of the John--Nirenberg-type inequality, and for this recall some standard dyadic notation.
We call a collection $\cq$ of sets {\it dyadic} if for every $Q,R\in \cq$ with $Q\cap R\neq \emptyset$ we have $Q\subseteq R$ or $R\subseteq Q$. As usual in the context of stopping cubes, the parent and the children relative to a dyadic collection $\cq$ are defined by
$$
\piq(Q):=\min \{R\in\cq : R\supsetneq Q\},\quad 
\chq(Q):=\{R\in \cq : \text{$R$ maximal with $R\subsetneq Q$}\}.
$$
The notation $\cq(Q):=\{R\in \cq : R\subseteq Q\}$ for the collection of subcubes is used. 

The following John--Nirenberg-type inequality is a slight variant of the inequality \eqref{eq:john_nirenberg_stromberg} and of the John--Nirenberg inequality for martingale $\operatorname{BMO}$ spaces \cite{herz1974} (in the particular case of the dyadic filtration). 
The proof below is a combination of the proof of Lerner's median oscillation decomposition \cite{Ler10} with a stopping condition from \cite{Han15}.
\begin{lemma}[Dyadic John--Nirenberg-type inequality, cf. \cite{johnnirenberg1961,john1965,herz1974,stromberg1979,Ler10}]\label{lem:JN1}Let $(\Omega,\mu)$ be a measure space. Let $\cq$ be a finite dyadic collection of measurable sets. Let $\{\lambda_Q\}_{Q\in\cq}$ be a family of complex-valued measurable functions such that $\supp (\lambda_Q)\subseteq Q$ and such that $\lambda_Q$ is constant on each $R\in\chq(Q)$. 
Then, there holds that 
\begin{equation}\label{eq:temp5}
\abs{\sum_{R\in\cq(Q)}\lambda_R}\lesssim \Big(\sup_{R\in\cq(Q)} \norm{\lambda_R}_{L^\infty(R,\mu)}+\sup_{R\in\cq(Q)} \norm{\sum_{S\in\cq(R)}\lambda_S}_{L^\infty_{1/4}(R,\mu)}\Big)\phi_Q,
\end{equation}
where $\phi_Q:\Omega\to [0,+\infty)$ is a measurable function supported on $Q$ and exponentially integrable in that $\mu(\phi_Q>\lambda)\lesssim 2^{-\lambda}\mu(Q)$.
\end{lemma}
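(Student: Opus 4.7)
Set $A := \sup_{R\in\cq(Q)}\norm{S_R}_{L^\infty_{1/4}(R,\mu)}$ and $B := \sup_{R\in\cq(Q)}\norm{\lambda_R}_{L^\infty(R,\mu)}$, where $S_R := \sum_{T\in\cq(R)}\lambda_T$. The recurring tool is the structural identity $S_P = \lambda_P|_{P'} + S_{P'}$ on each $P'\in\chq(P)$, which iterates to show that $S_P - S_R$ is a constant on $R$ for every $R\in\cq(P)$.

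The plan is to build a sparse stopping tree $\ct \subseteq \cq(Q)$, combining Lerner's median oscillation decomposition with the stopping condition from \cite{Han15}. Put $\ct_0 := \{Q\}$, and for each $P\in\ct_n$ set $E_P := \{x\in P : |S_P(x)| > A\}$ (which has $\mu(E_P)\leq\mu(P)/4$ by hypothesis) and let $\ct_{n+1}^P$ be the collection of maximal $R\in\cq$ with $R\subsetneq P$ and $\mu(E_P\cap R) > \mu(R)/2$. By maximality these cubes are pairwise disjoint, so $\sum_{R\in\ct_{n+1}^P}\mu(R) \leq 2\mu(E_P) \leq \mu(P)/2$ and $\sum_{P\in\ct_n}\mu(P) \leq 2^{-n}\mu(Q)$ by iteration. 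Put $\phi_Q(x) := 1 + \max\{n : x\in P\text{ for some }P\in\ct_n\}$: then $\phi_Q\geq 1$ on $Q$ and $\mu(\phi_Q>\lambda) \lesssim 2^{-\lambda}\mu(Q)$.

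It remains to verify the pointwise estimate $|S_Q(x)| \lesssim (A+B)\phi_Q(x)$ for $\mu$-a.e.\ $x\in Q$. Given $x$ with stopping depth $k := k(x)$, let $Q = P_0 \supsetneq \cdots \supsetneq P_k$ be its chain of stopping cubes. For the deepest cube, let $L\in\cq(P_k)$ be the smallest element containing $x$; since $L$ has no proper $\cq$-subcube through $x$, $S_L(x)=\lambda_L(x)$ and $|S_L(x)|\leq B$. Maximality of $k$ forces $\mu(E_{P_k}\cap L)\leq\mu(L)/2$, while the hypothesis gives $\mu(\{|S_L|>A\}\cap L)\leq\mu(L)/4$. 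Since $S_{P_k}|_L = c + S_L|_L$ for a constant $c$ by the structural identity, a point $y$ in the non-empty intersection of these two good sets yields $|c|\leq 2A$, hence $|S_{P_k}(x)|\leq|c|+|S_L(x)|\leq 2A+B$. For each $j<k$, the jump $(S_{P_j}-S_{P_{j+1}})|_{P_{j+1}}$ is a constant by the structural identity, bounded by $2A+B$: letting $T_j := \piq(P_{j+1})$, either $T_j=P_j$ (the jump equals $\lambda_{P_j}|_{P_{j+1}}$, magnitude $\leq B$) or $T_j\subsetneq P_j$ is non-selected, in which case $\mu(T_j\setminus E_{P_j})\geq\mu(T_j)/2$ and the good set of $T_j$ has measure $\geq 3\mu(T_j)/4$, producing $z\in T_j$ with $|S_{P_j}(z)|,|S_{T_j}(z)|\leq A$; the structural identity then forces $|S_{P_j}-S_{T_j}|\leq 2A$ on all of $T_j\supseteq P_{j+1}$, while $|S_{T_j}-S_{P_{j+1}}| = |\lambda_{T_j}|_{P_{j+1}}|\leq B$. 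Telescoping yields $|S_Q(x)| \leq (2A+B)(k+1) \leq 2(A+B)\phi_Q(x)$.

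The main obstacle is the jump bound in the last step: the refined density threshold $1/2$ in the stopping (rather than insisting $R\subseteq E_P$) is precisely what the \cite{Han15} refinement provides, guaranteeing that the $\cq$-parent $T_j$ of any stopping descendant is non-selected with density $\leq 1/2$. This is exactly what admits the reference point $z$ simultaneously good for $S_{P_j}$ and $S_{T_j}$, making the jump bound $2A+B$ possible; the uniform bound $B$ then handles the single $\cq$-step from $T_j$ down to $P_{j+1}$.
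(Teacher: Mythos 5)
Your proof is correct and lands on the same inequality, but it runs a genuinely different stopping algorithm than the paper's. The paper selects the maximal $F'\in\cq(F)$ on which the drift $\big|\sum_{F'\subsetneq R\subseteq F}\lambda_R\big|=|S_F-S_{F'}|$ first exceeds twice the $L^\infty_{1/4}$ bound; the pointwise estimate is then immediate from the stopping rule (the running sum is at most $2B$ before stopping, plus one $\lambda$-term from the minimal non-selected cube), while the sparsity $\sum_{F'}\mu(F')\leq\tfrac12\mu(F)$ takes a two-term union bound $\mu(F')\leq\mu(F'\cap\{|S_F|>B\})+\mu(F'\cap\{|S_{F'}|>B\})$. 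You instead select the maximal $R\subsetneq P$ on which the bad set $E_P=\{|S_P|>A\}$ has density exceeding $1/2$; this makes sparsity immediate from the density threshold, but the pointwise bound must be earned via the reference-point argument, producing points $y\in L$ and $z\in T_j$ in non-empty intersections of good sets to control the constants $S_{P_k}-S_L$ and $S_{P_j}-S_{T_j}$. The two proofs thus redistribute the work between the sparsity step and the pointwise step, ending at the same constant (your $2A+B$ is the paper's $2B+C$ with $A\leftrightarrow B$ and $B\leftrightarrow C$) and the same $2^{-\lambda}$ decay. As you observe, your pointwise bound holds only $\mu$-a.e.; this is harmless, since the exceptional set is a $\mu$-null union of cubes of zero measure together with the essential-sup null sets of the finitely many $\lambda_L$.
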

\begin{proof}Proof by stopping time.  The standing assumption in the proof is that all the cubes belong to the collection $\cq$ and hence ``$\cq$'' is mostly omitted from the notation. Set 
\begin{align*}
    C &:=\sup_{Q\in\cq} \norm{\lambda_Q}_{L^\infty}, \qquad B:=\sup_{Q\in\cq} B_Q,\\
B_Q &:=\Big\|\sum_{R\subseteq Q} \lambda_R\Big\|_{L^\infty_{1/4}(Q)}:=\min\Big\{B>0 : \mu\big(\abs{\sum_{R\subseteq Q} \lambda_R }>B\big)\leq \frac{1}{4}\mu(Q) \Big\},
\end{align*}
Fix a cube $F\in\cq$. Let $\chf(F):=\{F'\}$ be the set of all the maximal cubes $F'\in\cq(F)$ that satisfy the stopping condition
$$
\abs{\sum_{F'\subsetneq Q\subseteq F}\lambda_Q}>2B \quad\text{on $F'$},
$$ 
notice that the function on the left-hand side is constant on $F'$. 
Since
$$
1_{F'}\sum_{F'\subsetneq Q\subseteq F}\lambda_Q=1_{F'}\Big(
\sum_{Q\subseteq F}\lambda_Q-\sum_{Q\subseteq F'}\lambda_Q\Big),
$$
we have
\begin{align*}
    \mu(F') &=\mu(F'\cap\{\abs{\sum_{F'\subsetneq Q\subseteq F}\lambda_Q}>2B\} \\ 
    &\leq \mu(F'\cap \{\sum_{Q\subseteq F}\lambda_Q>B\})+  \mu(F'\cap \{\sum_{Q\subseteq F'}\lambda_Q>B\}).
\end{align*}
Summing this over the disjoint cubes $F'$ yields
\begin{align}\label{eq:childrenofthelight}
    \mu(F')\leq \mu(F\cap \{\sum_{Q\subseteq F}\lambda_Q>B_F\})+  \sum_{F'}\mu(F'\cap \{\sum_{Q\subseteq F'}\lambda_Q>B_{F'}\})\leq \frac{1}{2}\mu(F).
\end{align}
Set $E(F):=F\setminus \bigcup_{F'} F'$. We decompose
$$
\sum_{Q\subseteq F}\lambda_Q=\sum_{F'} 1_{F'} \Big(\sum_{\pi(F')\subsetneq Q\subseteq F}\lambda_Q\Big)+ 1_{E(F)} \sum_{Q\subseteq F}\lambda_Q+\sum_{F'}1_{F'}\lambda_{\pi(F')}  +\sum_{F'}\sum_{Q\subseteq F'}\lambda_Q.
$$
The cube $\pi(F')\supsetneq F'$ does not satisfy the stopping condition and hence
$$
\abs{\sum_{\pi(F')\subsetneq Q\subseteq F}\lambda_Q}\leq 2B.
$$
Next, consider $x\in E(F)$ and let $Q_*(x)$ denote the minimal cube $Q\in \cq$ such that $x\in Q\subseteq F$. If $Q_*(x)=F$, then
$
1_{E(F)}(x) \abs{\sum_{Q\subseteq F}\lambda_Q} = |\lambda_F|\leq C1_F.
$
If $Q_*(x)\subsetneq F$, then $Q_*(x)$ does not satisfy the stopping condition because otherwise there would be a cube $F'$ such that $F'\supseteq Q_*(x)$ and hence $x\notin E(F)$, which would be a contradiction. Therefore,
$$
1_{E(F)}(x) \abs{\sum_{Q\subseteq F}\lambda_Q}=1_{Q_*}\abs{\sum_{Q_*(x)\subsetneq Q\subseteq F}\lambda_Q}\leq 2B1_F.
$$
Altogether, 
$$
\abs{\sum_{Q\subseteq F}\lambda_Q}\leq (2B+C) 1_{F} +(2B+C)\sum_{F'} 1_{F'}+\sum_{F'}1_{F'}\abs{\sum_{Q\subseteq F'}\lambda_Q}.
$$
The proof is completed by using the above pointwise estimate and the estimate \eqref{eq:childrenofthelight} for the measure of the stopping children iteratively.
\end{proof}
\begin{remark}The above proof works in the following slightly more general cases: (1) In the case of an infinite collection $\cq$, assuming the existence of unconditional sums and the existence of maximal subsets. (2) In the case of vector-valued functions (instead of complex-valued). (3) In the case that for some $k\in\bn$ every function $\lambda_Q$ is constant on each $Q'\in\chq^{(k)}(Q)$, after replacing the term $\sup_Q \norm{\lambda_Q}_{L^\infty(Q)}$ by the term $k\cdot \sup_Q \norm{\lambda_Q}_{L^\infty(Q)}$.
\end{remark}
As well-known, the term $\sup_{R\in\cq(Q)} \norm{\lambda_R}_{L^\infty(R,\mu)}$ can be essentially omitted from \eqref{eq:temp5} when the underlying measure is doubling:
\begin{lemma}[Estimate when the underlying measure is doubling]\label{lem:JN2} Let $\cd$ be a dyadic system and let $ \cq\subseteq\cd$ be a collection of dyadic cubes. Assume that $\mu$ is dyadically doubling, which means (written in terms of the inverse of the usual quantity) that
$$
\gamma_\mu:=\inf_{Q\in\cq}\inf_{Q'\in\chd(Q)} \frac{\mu(Q')}{\mu(Q)}>0.
$$
Assume that every function $\lambda_Q$ is constant on each $Q'\in\chd(Q)$. 
Then for every $Q\in\cq$ there holds that
$$
\norm{\lambda_Q}_{L^\infty(Q,\mu)}\leq 2 \sup_{R\in\cq(Q)} \norm{\sum_{S\in\cq(R)}\lambda_S}_{L^\infty_{\gamma_\mu/2}(R,\mu)}.
$$
\end{lemma}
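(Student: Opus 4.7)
By the constancy assumption, $\lambda_Q = \sum_{Q' \in \chd(Q)} c_{Q'} 1_{Q'}$ for some constants $c_{Q'}$, so $\norm{\lambda_Q}_{L^\infty(Q,\mu)} = \max_{Q' \in \chd(Q)} |c_{Q'}|$. Setting $B := \sup_{R \in \cq(Q)} \norm{\sum_{S \in \cq(R)} \lambda_S}_{L^\infty_{\gamma_\mu/2}(R,\mu)}$, my plan is to fix a child $Q^* \in \chd(Q)$ achieving $|c_{Q^*}| = \norm{\lambda_Q}_{L^\infty(Q,\mu)}$ and then produce a single point $x \in Q^*$ at which both partial sums $\sum_{S \in \cq(Q)} \lambda_S$ and $\sum_{S \in \cq,\, S \subseteq Q^*} \lambda_S$ are simultaneously bounded by $B$ in absolute value. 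Combined with the pointwise identity $\sum_{S \in \cq(Q)} \lambda_S = c_{Q^*} + \sum_{S \in \cq,\, S \subseteq Q^*} \lambda_S$ on $Q^*$, this immediately yields $|c_{Q^*}| \leq 2B$, which is the desired inequality.

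The first step is to verify the identity. If $S \in \cq(Q) \setminus \{Q\}$ intersects $Q^*$, then dyadic nestedness together with the fact that $Q^* \in \chd(Q)$ is a dyadic child of $Q$ force $S \subseteq Q^*$; hence on $Q^*$ the full sum reduces to $\lambda_Q|_{Q^*} + \sum_{S \in \cq,\, S \subseteq Q^*} \lambda_S = c_{Q^*} + \sum_{S \in \cq,\, S \subseteq Q^*} \lambda_S$, regardless of whether $Q^*$ itself lies in $\cq$.

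The second step is to establish $\norm{\sum_{S \in \cq,\, S \subseteq Q^*}\lambda_S}_{L^\infty_{\gamma_\mu/2}(Q^*,\mu)} \leq B$. If $Q^* \in \cq$, this is immediate by taking $R = Q^*$ in the sup defining $B$. If $Q^* \notin \cq$, let $\cq^{\mathrm{max}}(Q^*)$ denote the pairwise disjoint family of $\cq$-cubes that are maximal inside $Q^*$; then $\sum_{S \in \cq,\, S \subseteq Q^*}\lambda_S = \sum_{R' \in \cq^{\mathrm{max}}(Q^*)} \sum_{S \in \cq(R')} \lambda_S$ is a sum of disjointly supported functions, so the set in $Q^*$ where its modulus exceeds $B$ is the disjoint union of the corresponding sets on each $R'$, each of measure at most $(\gamma_\mu/2)\mu(R')$, totalling at most $(\gamma_\mu/2)\mu(Q^*)$.

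The final step combines these two bounds on $Q^*$ via the doubling hypothesis $\mu(Q) \leq \mu(Q^*)/\gamma_\mu$. The set where $|\sum_{S \in \cq(Q)}\lambda_S| > B$ intersected with $Q^*$ has measure at most $(\gamma_\mu/2)\mu(Q) \leq \mu(Q^*)/2$, while the corresponding set for the inner sum has measure at most $(\gamma_\mu/2)\mu(Q^*)$; their union has measure at most $\tfrac{1+\gamma_\mu}{2}\mu(Q^*) < \mu(Q^*)$ (for $\gamma_\mu < 1$; the case $\gamma_\mu = 1$ is degenerate). Hence a good point $x \in Q^*$ exists and $|c_{Q^*}| = |\sum_{S \in \cq(Q)}\lambda_S(x) - \sum_{S \in \cq,\, S\subseteq Q^*}\lambda_S(x)| \leq 2B$. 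The main delicate point is precisely this measure-theoretic juggling: one exceptional measure is computed against $\mu(Q)$ and the other against $\mu(Q^*)$, and the choice of threshold $\gamma_\mu/2$ (rather than $\gamma_\mu$) in the definition of $B$ is calibrated exactly so that the doubling inflation $\mu(Q)/\mu(Q^*) \leq 1/\gamma_\mu$ still leaves positive room inside $Q^*$.
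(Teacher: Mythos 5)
Your proof is correct and essentially mirrors the paper's own argument: both rest on the decomposition of $\lambda_Q$ (restricted to a dyadic child) as a difference of the two telescoping sums $\sum_{S\in\cq(Q)}\lambda_S$ and $\sum_{S\in\cq,\, S\subseteq Q^*}\lambda_S$, on the disjointness of the maximal $\cq$-subcubes to control the exceptional set of the inner sum, and on the doubling observation $\mu(Q^*)\ge\gamma_\mu\mu(Q)$ to trade exceptional measure on $Q$ for exceptional measure on $Q^*$ with the margin created by the factor $1/2$. The only difference is cosmetic: you localize to the maximizing dyadic child $Q^*$ and exhibit a single good point, whereas the paper carries out the same union bound at the level of $L^\infty_\gamma$ pseudonorms over all of $Q$, using the identity $\|\lambda_Q\|_{L^\infty(Q)}=\|\lambda_Q\|_{L^\infty_{\gamma_\mu}(Q)}$.
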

 \begin{proof}
Fix $Q\in \cq$. Set 
$$
B(\lambda_Q):=\inf\{B>0 : \mu(\lambda_Q>B)\leq \gamma_\mu \mu(Q)\}.
$$ 
We claim that $B(\lambda_Q)\geq \norm{\lambda_Q}_\infty.$ 
Let $\varepsilon>0$ be an auxiliary number. 
Since $\lambda_Q$ is supported on $Q$, constant on each $Q'\in\chd(Q)$, and $Q=\bigcup_{R\in \chd(Q)}Q'$, we have
$$
\mu(\lambda_Q>\norm{\lambda_Q}_\infty -\varepsilon)\geq \inf_{Q'\in\chd(Q)} \mu(Q')\geq \gamma_\mu \mu(Q).
$$
Therefore
\begin{align*}
    &\inf \{B>0 : \mu(\lambda_Q>B)\leq \gamma_\mu(Q)-\varepsilon\} \\ 
    &\qquad \geq \inf \{B>0 : \mu(\lambda_Q>B)< \gamma_\mu(Q)\} \geq \norm{\lambda_Q}_\infty -\varepsilon
\end{align*}
and hence, by taking the limit $\varepsilon\to 0:$ 
$$
B(\lambda_Q)\geq \norm{\lambda_Q}_\infty.
$$
Now, set
$$
B(\sum_{R\in\cq(Q)}\lambda_R):=\inf\big\{B>0 : \mu\big(\sum_{R\in\cq(Q)} \lambda_R>B\big)\leq \frac{1}{2}\gamma_\mu \mu(Q)\big\}.
$$
Now
\begin{align*}
    &\mu\Big(\abs{\lambda_Q}> B(\sum_{R\in\cq(Q)}\lambda_R)+\sup_{R\in\chq(Q)}B(\sum_{S\in\cq(R)}\lambda_S)\Big)\\
&=\mu\Big(\abs{\sum_{R\in\cq(Q)}\lambda_R-\sum_{R\in\chq(Q)}\sum_{S\in\cq(R)}\lambda_S}> B(\sum_{R\in\cq(Q)}\lambda_R)+\sup_{R\in\chq(Q)}B(\sum_{S\in\cq(R)}\lambda_S)\Big)\\
&\leq \mu\Big(\abs{\sum_{R\in\cq(Q)}\lambda_R}> B(\sum_{R\in\cq(Q)}\lambda_R)\Big) \\
&\qquad\qquad+\mu\Big(\abs{\sum_{R\in\chq(Q)}\sum_{S\in\cq(R)}\lambda_S}> \sup_{R\in\chq(Q)}B(\sum_{S\in\cq(R)}\lambda_S)\Big)\\
&\leq \mu\Big(\abs{\sum_{R\in\cq(Q)}\lambda_R}> B(\sum_{R\in\cq(Q)}\lambda_R)\Big) \\ 
&\qquad\qquad+\sum_{R\in\chq(Q)}\mu\Big(\abs{\sum_{S\in\cq(R)}\lambda_S}> B(\sum_{S\in\cq(R)}\lambda_S)\Big)\\
&\leq \frac{1}{2}\gamma_\mu\mu(Q)+\sum_{R\in\chq(Q)}\frac{1}{2}\gamma_\mu\mu(R)\leq \gamma_\mu\mu(Q).
\end{align*}
Therefore,
$$
B(\lambda_Q)\leq B(\sum_{R\in\cq(Q)}\lambda_R)+\sup_{R\in\chq(R)}B(\sum_{S\in\cq(R)}\lambda_S)\leq 2 \sup_{R\in\cq(Q)}B(\sum_{S\in\cq(R)}\lambda_S).
$$
 \end{proof}
 Combining the above two lemmas yields immediately the following inequality:
\begin{corollary}[John--Nirenberg inequality for collections of dyadic cubes]\label{corollary:john_nirenberg}\label{lem:JN3}Let $\cd$ be a dyadic system and $\cq\subseteq \cd$ a collection of dyadic cubes. Assume that $\mu$ is dyadically doubling. Let $\{\lambda_Q\}_{Q\in\cq}$ be a family of functions such that every function $\lambda_Q$ is supported on $Q$ and constant on each $Q'\in \chd(Q)$. Then
$$
\sup_{Q\in\cq}\frac{1}{\mu(Q)^{1/p}}\norm{\sum_{R\in \cq: R \subseteq Q}\lambda_R}_{L^p(Q,\mu)}\lesssim_{\mu,p}\sup_{Q\in\cq}\norm{\sum_{R\in \cq: R \subseteq Q}\lambda_R}_{\textit{\L}^{1,\infty}(Q,\mu)}
$$
for all $p\in[1,+\infty)$. The implicit constant depends only on the doubling constant of the measure $\mu$.
\end{corollary}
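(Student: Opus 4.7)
The plan is a direct assembly of Lemmas \ref{lem:JN1} and \ref{lem:JN2}, followed by passing from an exponential distributional estimate to an $L^p$ norm. The authors' flag \emph{``combining the above two lemmas yields immediately''} suggests (and I expect) no genuine obstacle beyond arranging the ingredients correctly; the real work has already been done in the two lemmas.

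First, fix $Q\in\cq$. Lemma \ref{lem:JN1} gives the pointwise bound
\begin{equation*}
\Big|\sum_{R\in\cq(Q)}\lambda_R\Big|\lesssim \Big(\sup_{R\in\cq(Q)}\norm{\lambda_R}_{L^\infty(R,\mu)}+\sup_{R\in\cq(Q)}\norm{\sum_{S\in\cq(R)}\lambda_S}_{L^\infty_{1/4}(R,\mu)}\Big)\phi_Q,
\end{equation*}
with $\mu(\phi_Q>\lambda)\lesssim 2^{-\lambda}\mu(Q)$. The first term is the only one carrying a full $L^\infty$ norm; to tame it I would apply Lemma \ref{lem:JN2} to each $R\in\cq(Q)$, obtaining
\begin{equation*}
\sup_{R\in\cq(Q)}\norm{\lambda_R}_{L^\infty(R,\mu)}\leq 2\sup_{S\in\cq(Q)}\norm{\sum_{T\in\cq(S)}\lambda_T}_{L^\infty_{\gamma_\mu/2}(S,\mu)}.
\end{equation*}
This is the single place where dyadic doubling is used.

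Second, set $\gamma:=\min(1/4,\gamma_\mu/2)$. Since $L^\infty_\gamma$ is monotone decreasing in $\gamma$, both bulk terms above are dominated by $\sup_{R\in\cq(Q)}\norm{\sum_{S\in\cq(R)}\lambda_S}_{L^\infty_\gamma(R,\mu)}$. Then invoking \eqref{eq:weakl1} trades $L^\infty_\gamma$ for $\textit{\L}^{1,\infty}$ at the cost of the factor $1/\gamma\lesssim_\mu 1$, giving the pointwise estimate
\begin{equation*}
\Big|\sum_{R\in\cq(Q)}\lambda_R\Big|\lesssim_{\mu} \Big(\sup_{R\in\cq(Q)}\norm{\sum_{S\in\cq(R)}\lambda_S}_{\textit{\L}^{1,\infty}(R,\mu)}\Big)\phi_Q.
\end{equation*}

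Third, take the $L^p(Q,\mu)$ norm. The layer-cake identity together with the exponential tail bound on $\phi_Q$ yields $\norm{\phi_Q}_{L^p(Q,\mu)}\lesssim_p \mu(Q)^{1/p}$ for every $p\in[1,+\infty)$. Dividing by $\mu(Q)^{1/p}$, enlarging the inner supremum from $\cq(Q)$ to $\cq$, and taking the sup over $Q\in\cq$ finishes the proof. The only point demanding attention, rather than a real obstacle, is the choice $\gamma=\min(1/4,\gamma_\mu/2)$ that lets one single $L^\infty_\gamma$-sup dominate both terms coming from Lemma \ref{lem:JN1} after the doubling absorption step.
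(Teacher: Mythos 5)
Your proof is correct and matches the paper's intended argument: apply Lemma \ref{lem:JN1}, pass the two bracketed terms to $\textit{\L}^{1,\infty}$ via \eqref{eq:weakl1}, and bound $\|\phi_Q\|_{L^p}\lesssim_p\mu(Q)^{1/p}$ by the layer-cake formula. The only (cosmetic) difference is that for the first bracketed term you invoke Lemma \ref{lem:JN2} — exactly the ``alternatively'' route flagged in the paper's parenthetical — whereas the paper's written proof handles that term by an inline computation expressing $\lambda_R$ on each dyadic child $R'$ as a difference of two $\cq$-sums and then applying the averaged weak-$L^1$ norm together with dyadic doubling; both routes are equivalent and the rest of the argument is identical.
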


\begin{proof} For the reader's convenience, the well-known estimation is presented in what follows. Fix $Q\in\cq$. 
By Lemma \ref{lem:JN1} we bound 
\begin{align*}
    \norm{\sum_{R\in\cq(Q)}\lambda_R}_{L^p(Q,\mu)} \lesssim \Big( \sup_{R\in\cq(Q)} \norm{\lambda_R}_{L^\infty(R,\mu)} + \sup_{R\in\cq(Q)}
     \norm{\sum_{S\in\cq(R)}\lambda_S}_{L^\infty_{1/4}(R,\mu)}  \Big)\norm{\phi_Q}_{L^p(\mu)}. 
\end{align*}
The first bracketed term can be estimated as follows. Since $\lambda_Q$ is constant on the dyadic children $Q'\in\chd(Q)$ and $\mu$ is dyadically doubling, we obtain
\begin{align*}
     \norm{\lambda_R}_{L^\infty(R)} &= \max_{R'\in\chd(R)} \big\| \lambda_R\big\|_{\textit{\L}^{1,\infty}(R',\mu)}  \\
     &= \max_{R'\in\chd(R)} \Big\|  \sum_{S\in \cq: S \subseteq R}\lambda_S -  \sum_{S\in \cq: S \subseteq R'}\lambda_S \Big\|_{\textit{\L}^{1,\infty}(R',\mu)}  \\
    &\lesssim_\mu \sup_{S\in\cq(R)} 
    \Big\|  \sum_{T\in\cq(S)}\lambda_T \Big\|_{\textit{\L}^{1,\infty}(S,\mu)}.
\end{align*}
(Alternatively, the first bracketed term can be essentially absorbed into the second  bracketed term by Lemma \ref{lem:JN2}.) For the second bracketed term it follows by \eqref{eq:weakl1} that
$$
\Big\|\sum_{S\in\cq(R)}\lambda_S\Big\|_{L^\infty_{1/4}(R,\mu)}\lesssim \Big\|  \sum_{S\in\cq(R)}\lambda_S \Big\|_{\textit{\L}^{1,\infty}(R,\mu)}.
$$
Finally, we notice that
\begin{align*}
    \norm{\phi_Q}_{L^p(\mu)}^p &=p\int_0^\infty \lambda^{p-1}\mu(\phi_Q>\lambda)\,\mathrm{d}\lambda \\&\leq p \mu(Q) \int_0^\infty \lambda^{p-1}\min\{1,2^{-\lambda}\}\,\mathrm{d}\lambda\lesssim_p \mu(Q).
\end{align*}

\end{proof}
The John--Nirenberg inequality for $\operatorname{VMO}$ on the Lebesgue measure spaces (with Muckenhoupt weights) was proved in \cite{HOS2023} by sparse dominating the oscillation (with the Lebesgue measure) $\fint_Q|b-\langle b\rangle_Q|.$ 
We next prove the John--Nirenberg equality for the measure-theoretic class $\mathrm{VMO}^p(\Omega,\cd,\mu)$ under the assumption that the measure $\mu$ is doubling by using a reduction to the John--Nirenberg inequality for collections of dyadic cubes (Corollary \ref{corollary:john_nirenberg}).

\begin{corollary}[John--Nirenberg inequality for $\operatorname{VMO}$ with doubling measures] Assume that $\mu$ is dyadically doubling. Then
$$
\mathrm{VMO}^p(\Omega,\cd,\mu)=\mathrm{VMO}^1(\Omega,\cd,\mu)
$$
for every $p\in(1,+\infty)$. 
\end{corollary}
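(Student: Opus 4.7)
\textbf{The plan} is to establish both inclusions. The direction $\mathrm{VMO}^p(\Omega,\cd,\mu)\subseteq \mathrm{VMO}^1(\Omega,\cd,\mu)$ is immediate from Jensen's inequality, since $\|b-\langle b\rangle_Q\|_{\avL^1(Q)}\leq \|b-\langle b\rangle_Q\|_{\avL^p(Q)}$ for $p\geq 1$, so each vanishing condition in the $\avL^p$ formulation implies the corresponding one in the $\avL^1$ formulation. For the reverse, fix $b\in \mathrm{VMO}^1$; since $b\in \mathrm{BMO}^1$, applying Corollary~\ref{corollary:john_nirenberg} to the collection of all dyadic cubes with $\lambda_Q:=D_Qb$ already gives $b\in \mathrm{BMO}^p$. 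It remains to upgrade the three vanishing conditions from $\avL^1$ to $\avL^p$.

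\textbf{Light and distant cubes.} The relevant collections $\cq_M:=\{Q\in\cd:\mu(Q)\leq 1/M\}$ and $\cq_M:=\{Q\in\cd:\dist(0,Q)\geq M\}$ are both downward-closed under set inclusion: every dyadic subcube of a light (resp.\ distant) cube is again light (resp.\ distant). For either collection I would apply Corollary~\ref{corollary:john_nirenberg} with $\lambda_R:=D_Rb$. The downward-closedness guarantees $\sum_{R\in\cq_M,\,R\subseteq Q}D_Rb=b-\langle b\rangle_Q$ pointwise a.e.\ on $Q$ (by martingale / Lebesgue differentiation under doubling), so the corollary yields
\[
\sup_{Q\in\cq_M}\|b-\langle b\rangle_Q\|_{\avL^p(Q)}\lesssim \sup_{Q\in\cq_M}\|b-\langle b\rangle_Q\|_{\textit{\L}^{1,\infty}(Q,\mu)}\leq \sup_{Q\in\cq_M}\|b-\langle b\rangle_Q\|_{\avL^1(Q)},
\]
and the right-hand side vanishes as $M\to\infty$ by the corresponding condition for $\mathrm{VMO}^1$.

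\textbf{Heavy cubes (the main obstacle).} The collection $\{\mu(Q)\geq M\}$ is upward-closed rather than downward-closed: if $R\subseteq Q$ and $Q$ is heavy, $R$ need not be. Consequently, restricting the sum in Corollary~\ref{corollary:john_nirenberg} to heavy $R\subseteq Q$ recovers only the ``heavy-ancestor'' contribution $\langle b\rangle_{L(\cdot)}-\langle b\rangle_Q$ (where $L(x)$ denotes the maximal non-heavy subcube of $Q$ containing $x$), and misses the small-scale oscillation $b-\langle b\rangle_{L(\cdot)}$ on the non-heavy part. The hard part will be controlling this small-scale piece, since a direct stopping-time decomposition on the $\avL^p$ side would be circular. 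My plan is to bypass the issue by extracting a \emph{local} John--Nirenberg interpolation on each $Q$: apply Lemma~\ref{lem:JN1} with $\cq:=\{R\in\cd:R\subseteq Q\}$ and $\lambda_R=D_Rb$, use Lemma~\ref{lem:JN2} to absorb the $\sup_R\|D_Rb\|_\infty$ term, and apply Chebyshev~\eqref{eq:Chebaby} to bound $\|b-\langle b\rangle_R\|_{\avL^\infty_{1/4}(R)}\lesssim \avL^1(R)\leq K:=\|b\|_{\mathrm{BMO}^1}$. The resulting pointwise bound $|b(x)-\langle b\rangle_Q|\lesssim K\,\phi_Q(x)$ combined with the exponential tail $\mu(\phi_Q>\lambda)\lesssim 2^{-\lambda}\mu(Q)$ delivers the distributional estimate $\mu(Q\cap\{|b-\langle b\rangle_Q|>t\})\lesssim 2^{-ct/K}\mu(Q)$. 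Interpolating this tail against the Chebyshev $\avL^1$-bound $\mu(Q\cap\{|b-\langle b\rangle_Q|>t\})\leq \mu(Q)\avL^1(Q)/t$ via the layer-cake formula and the elementary inequality $\min(a,b)\leq a^{1/p}b^{1-1/p}$ should yield a local interpolation of the form $\avL^p(Q)\lesssim_{p,K}\avL^1(Q)^{1/p^2}$. Applied to heavy $Q$ with $\mu(Q)\geq M\to\infty$, heavy $\mathrm{VMO}^1$ forces $\avL^1(Q)\to 0$, and hence $\avL^p(Q)\to 0$, completing the proof.
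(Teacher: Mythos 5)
Your proof is correct, and it takes a genuinely different route from the paper's. The paper proves the inclusion $\mathrm{VMO}^1\subseteq\mathrm{VMO}^p$ by first showing (via Corollary~\ref{corollary:john_nirenberg} together with Burkholder's weak-$L^1$ inequality, applied uniformly to any subcollection $\cq$) that $b\in\mathrm{VMO}^1$ implies the \emph{partial-sum} vanishing condition
$\lim_i\sup_{Q\in\cq_i}\mu(Q)^{-1/p}\big\|\sum_{R\in\cq_i:R\subseteq Q}D_Rb\big\|_{L^p(\mu)}=0$,
then invoking Theorem~\ref{thm:para:sufficiency} to obtain compactness of $P_{b,\cd}$ on $L^p(\mu)$, and finally Theorem~\ref{thm:para:necessity} to land in $\mathrm{VMO}^p$. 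In the paper's approach this detour through the compactness characterization is precisely what neutralizes the heavy-cube difficulty you identified: the partial sum over a heavy $\cq_i$ is \emph{not} $b-\langle b\rangle_Q$, yet the sufficiency theorem only needs the partial sum, and the necessity theorem hands back the full oscillation. You instead argue purely within the function class. For the light and distant cubes you exploit downward-closedness to identify the partial sum with the full oscillation and apply Corollary~\ref{corollary:john_nirenberg} directly (so Burkholder is not needed there); for heavy cubes you derive, from the local dyadic John--Nirenberg exponential tail together with the Chebyshev bound, a local interpolation of the type $\|b-\langle b\rangle_Q\|_{\avL^p(Q)}\lesssim_{p,\mu,\|b\|_{\mathrm{BMO}^1}}\|b-\langle b\rangle_Q\|_{\avL^1(Q)}^{1/p^2}$, which sends heavy-cube $\avL^p$-oscillations to zero along with the $\avL^1$-oscillations. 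Both routes are sound. The paper's is shorter and reuses the machinery already built, at the cost of passing through the operator; yours is operator-free and more elementary, at the cost of the extra layer-cake interpolation. Two small remarks on your write-up: (i) Lemma~\ref{lem:JN1} is stated for finite collections, so applying it to $\{R\in\cd:R\subseteq Q\}$ should be done via a finite truncation and a limiting argument (the remark after the lemma sanctions this); (ii) your local interpolation inequality actually holds for every cube $Q$ with constants depending only on $p$, the doubling constant, and $\|b\|_{\mathrm{BMO}^1}$, so in fact it could subsume the light/distant cases as well, making the case split unnecessary — not a flaw, just an available simplification.
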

\begin{proof} Let $p\in(1,+\infty)$. It is immediate from the proof of sufficiency that the (seemingly slightly weaker) condition $$
\lim_{i\to\infty}\sup_{Q\in\cq_i} \frac{1}{\mu(Q)^{1/p}} \norm{\sum_{\substack{R\in \cq_i : \\R\subseteq Q}}D_Rb}_{L^p(\mu)}=0\quad\text{for  $\cq_i\in\{\cq^{\mathrm{heavy}}_i,\cq^{\mathrm{light}}_i,\cq^{\mathrm{distant}}_i\}$}
$$
is sufficient for the compactness of the dyadic paraproduct $P_{b,\cd}:L^p(\mu)\to L^p(\mu)$, which in turn implies the necessary condition
$$
\lim_{i\to\infty}\sup_{Q\in\cq_i} \frac{1}{\mu(Q)^{1/p}} \norm{b-\angles{b}_Q}_{L^p(\mu)}=0\quad\text{for  $\cq_i\in\{\cq^{\mathrm{heavy}}_i,\cq^{\mathrm{light}}_i,\cq^{\mathrm{distant}}_i\}$}.
$$
Therefore, it suffices to check that
$$
\sup_{Q\in\cq} \frac{1}{\mu(Q)^{1/p}} \norm{\sum_{R\in \cq: R \subseteq Q}D_Rb}_{L^p(\mu)}\lesssim_{p,\mu} \sup_{Q\in\cq} \frac{1}{\mu(Q)} \norm{b-\angles{b}_Q}_{L^1(\mu)}
$$
for an arbitrary subcollection $\cq\subseteq \cd$. This is checked in two steps. First, by the John--Nirenberg inequality  (Corollary \ref{corollary:john_nirenberg}),
$$
\sup_{Q\in\cq} \frac{1}{\mu(Q)^{1/p}} \norm{\sum_{R\in \cq: R \subseteq Q}D_Rb}_{L^p(\mu)}\lesssim_{p,\mu} \sup_{Q\in\cq} \frac{1}{\mu(Q)} \norm{\sum_{R\in \cq: R \subseteq Q}D_Rb}_{L^{1,\infty}(\mu)}.
$$
Then, by Burkholder's weak-$L^1$ inequality, 
$$
\frac{1}{\mu(Q)} \norm{\sum_{R\in \cq: R \subseteq Q}D_Rb}_{L^{1,\infty}(\mu)}\lesssim \frac{1}{\mu(Q)} \norm{\sum_{R\in \cd: R \subseteq Q}D_Rb}_{L^1(\mu)}=\frac{1}{\mu(Q)} \norm{b-\angles{b}_Q}_{L^1(Q,\mu)}.
$$
\end{proof}

\printbibliography
\end{document}